\title[Discrete Cucker-Smale model with randomly switching topologies]{Emergence of stochastic flocking for the discrete Cucker-Smale model with randomly switching topologies}
\author[Dong]{Jiu-Gang Dong}
\address[Jiu-Gang Dong]{\newline Department of Mathematics, Harbin Institute of Technology,\newline Harbin 150001, P. R. China}
\email{jgdong@hit.edu.cn}
\author[Ha]{Seung-Yeal Ha}
\address[Seung-Yeal Ha]{\newline Department of Mathematical Sciences and Research Institute of Mathematics, \newline Seoul National University, Seoul 08826 \newline
and School of Mathematics, Korea Institute for Advanced Study,\newline
 Hoegiro 85, Seoul 02455, Korea (Republic of)}
\email{syha@snu.ac.kr}
\author[Jung]{Jinwook Jung}
\address[Jinwook Jung]{\newline Department of Mathematical Sciences, Seoul National University,\newline Seoul 08826, Korea (Republic of)}
\email{warp100@snu.ac.kr}
\author[Kim]{Doheon Kim}
\address[Doheon Kim]{\newline School of Mathematics, Korea Institute for Advanced Study,\newline Hoegiro 85, Seoul 02455, Korea (Republic of)}
\email{doheonkim@kias.re.kr}
\newtheorem{theorem}{Theorem}[section]
\newtheorem{lemma}{Lemma}[section]
\newtheorem{proposition}{Proposition}[section]
\newtheorem{remark}{Remark}[section]
\newtheorem{definition}{Definition}[section]
\newcommand{\bbr}{\mathbb R}
\newcommand{\bbz}{\mathbb Z}
\newcommand{\bbn} {\mathbb N}
\newcommand{\e}{\varepsilon}
\newcommand{\G}{\mathcal G}
\newcommand{\D}{\mathcal D}
\newcommand{\E}{\mathcal E}
\def\charf {\mbox{{\text 1}\kern-.30em {\text l}}}
\begin{document}

\date{\today}

\subjclass[2010]{39A12, 34D05, 68M10}  \keywords{Cucker-Smale model, randomly switching topology, directed graphs, flocking}

\allowdisplaybreaks

\thanks{\textbf{Acknowledgment.} The work of S.-Y. Ha is supported by the National Research Foundation of Korea(NRF-2017R1A2B2001864).  The work of J. Jung is supported by the National Research Foundation of Korea (NRF) grant funded by the Korea government (MSIP) : NRF-2016K2A9A2A13003815.}
\begin{abstract}
We study emergent dynamics of the discrete Cucker-Smale (in short, DCS) model with randomly switching network topologies. For this, we provide a sufficient framework leading to the stochastic flocking with probability one. Our sufficient framework is formulated in terms of an admissible set of network topologies realized by digraphs and probability density function for random switching times. As examples for the law of switching times, we use the Poisson process and the geometric process and show that these two processes satisfy the required conditions in a given framework so that we have a stochastic flocking with probability one. As a corollary of our flocking analysis, we improve the earlier result \cite{D-H-J-K} on the continuous C-S model.
\end{abstract}
\maketitle \centerline{\date}


\section{Introduction} \label{sec:1}
\setcounter{equation}{0}
The jargon ``{\it flocking}" denotes a collective phenomenon in which self-propelled particles are organized in an ordered motion with the same velocity using only environmental information, and it is sometimes used interchangeably in literature with other jargons such as swarming and herding. Flocking phenomena are often observed in many biological complex systems \cite{B-C, C-H-L, M-T2, T-T, V, V2} and found in applications in the engineering domain such as sensor networks \cite{L-P}. In 2007, Cucker and Smale \cite{C-S}  introduced a simple second-order Netwon-like model in the spirit of Vicsek's model \cite{V} as an analytically manageable model, and they derived sufficient frameworks leading to the global and local asymptotic flocking on the {\it complete graph}. After their work, the Cucker-Smale(CS) model has been extensively studied in applied math community from several aspects, e.g., stochastic noises \cite{A-H, B-C-C, C-M, H-L-L}, discrete approximation \cite{D-H-K2, H-K-L}, random failures \cite{ D-M,D-M2, H-M,  R-L-X},   time-delayed communications \cite{C-H, C-L, E-H-S, L-W}, collision avoidance \cite{C-D}, network topologies \cite{C-D2,C-D3, D-Q, L-H, L-X, P-V, Shen}, mean-field limit \cite{H-K-Z, H-L},  kinetic and hydrodynamic descriptions \cite{C-C-R, C-F-R-T, F-H-T, H-T, P-S}, nonsymmetric communication networks \cite{M-T}, bonding force \cite{P-K-H}, extra internal variables \cite{H-K-K-S, H-K-R}, etc (see recent survey articles \cite{A-B-F, C-H-L}). 

In this paper, among others, we focus on the effect of the randomly switching network topologies in asymptotic flocking. To fix the idea, consider a group of migrating birds. Then, one easily observes that leaders can change over time, birds' flock can split into several local groups, and then some of them can merge to form a bigger group from time to time. Thus, the underlying network topologies keep changing over time. Hence, it is natural to consider flocking models with randomly switching network topologies for realistic flocking modeling. In this direction, authors recently studied the emergent dynamics of the continuous CS model with randomly switching network topologies and provided a sufficient framework for asymptotic global flocking in \cite{D-H-J-K} (see related works \cite{L-A-J, Wu06} for linear consensus models with randomly switching topologies).

The goal of this work is to revisit the discrete Cucker-Smale model with randomly switching network topologies, and provide a sufficient framework for global flocking and improve the earlier result \cite{D-H-J-K} on the continuous  model. To address our problem, we first discuss our problem setting. Let ${\mathcal S}_{\mathcal G} :=\{ {\mathcal G}_1, \cdots, {\mathcal G}_{N_G} \}$ be the admissible set of network topologies (digraphs) satisfying a certain connectivity condition to be specified later, and let $x_i$ and $v_i$ be the position and velocity of the $i$-th C-S particle under the influence of randomly switching network topologies. Then, their dynamics is governed by the CS model:
\begin{equation}\label{CS}
\begin{cases}
\displaystyle \dot{x}_i = v_i,  \quad t>0, \quad 1 \leq i \leq N, \\
\displaystyle \dot{v}_i = \frac{1}{N}\sum_{j=1}^N \chi_{ij}^\sigma \phi(\|x_j - x_i\|)  \left(v_j-v_i\right),
\end{cases}
\end{equation}

\noindent where $\|\cdot\|$ denotes the standard $\ell^2$-norm in $\bbr^d$ and the communication weight $\phi: [0,\infty) \to \bbr_+$ is bounded, Lipschitz continuous and monotonically decreasing:
\begin{align}
\begin{aligned} \label{comm}
& 0 < \phi(r) \le \phi(0)=: \kappa, \quad [\phi]_{Lip} := \sup_{x \not = y} \frac{|\phi(x) - \phi(y)|}{|x-y|} < \infty, \\
&  (\phi(r)-\phi(s))(r-s) \le 0, \quad r,s\geq 0.
\end{aligned}
\end{align}
The adjacent matrix $(\chi_{ij}^\sigma)$ denotes the time-dependent network topology corresponding to the randomly switching law $\sigma :[0,\infty)\times\Omega \to \{1 , \cdots, N_G\}$. The law $\sigma$ is associated with a sequence $\{t_\ell\}_{\ell\ge0}$ of random variables, called {\it random switching instants}, such that for each fixed $\omega \in \Omega$, $\sigma(\omega) :[0,\infty) \to \{1 , \cdots, N_G\}$ is piecewise constant and right-continuous whose discontinuities are $\{t_\ell(\omega)\}_{\ell\ge0}$.  In addition, the random law $\sigma$ assigns \eqref{CS} with a network topology at each instant. Specifically, once  we fix $\omega\in\Omega$ and an instant $t>0$, then $\sigma(t,\omega) = \sigma(t_\ell(\omega)) = k$ for some $1\le k \le N_G$ and $\ell \geq 0$, and we equip system \eqref{CS} with the network topology $(\chi_{ij}^{\sigma(t,\omega)})$ given by the 0-1 adjacency matrix of the $k$-th digraph $\mathcal{G}_k \in {\mathcal S}_{\mathcal G}$.\newline

Next, we consider the discrete analog of CS whose emergent dynamics we will study. The discrete counterpart of CS will be given by the forward first-order Euler method. Let $h:=\Delta t$ be the time-step, and for notational simplicity, we use the following handy notation:
\[ x_i[t]:=x_i(th), \quad  v_i[t]:=v_i(th) \quad \mbox{and} \quad \sigma[t]:=\sigma(th), \quad t \in \{0  \} \cup \bbz_+. \]
Then, the discrete C-S model reads as follows: 
\begin{equation} \label{CS_d}
\begin{cases} 
\displaystyle x_i[t+1] = x_i[t] + hv_i[t],\quad \quad t = 0, 1, \cdots, \quad 1 \le i \le N, \\
\displaystyle v_i[t+1] = v_i[t] + \frac{h}{N}\sum_{j=1}^N \chi_{ij}^{\sigma[t]} \phi(\|x_j[t]-x_i[t]\|)(v_j[t] - v_i[t]).
\end{cases}
\end{equation}
In this paper, we look for a sufficient framework leading to the asymptotic global flocking:
\begin{equation*} \label{A-1}
\sup_{0 \leq t < \infty} \max_{1 \leq i, j \leq N} \| x_i[t] - x_j[t]  \| < \infty, \quad  \lim_{t \to \infty} \max_{1 \leq i, j \leq N} \| v_i[t] - v_j[t] \| = 0.
\end{equation*}
Before we present our main result, we introduce configuration diameters:
\begin{align*}
\begin{aligned} 
& X := (x_1, \cdots, x_N), \quad V := (v_1, \cdots, v_N), \\
& {\mathcal D}(X) := \max_{1 \leq i, j \leq N}\|x_i - x_j\|, \quad {\mathcal D}(V) := \max_{1 \leq i, j \leq N} \|v_i - v_j\|.
\end{aligned}
\end{align*}
Below, we briefly discuss our main result for \eqref{CS_d}. As aforementioned, we need to impose certain conditions on the admissible set ${\mathcal S}_{\mathcal G}$ for network topologies and the probability distribution of the random switching times $\{t_\ell \}$. As discussed in \cite{D-H-J-K}, the random dwelling times between immediate switchings need to be made not too short and not too long. If this happens and choice of network topologies in the admissible set ${\mathcal S}_{\mathcal G}$ is made randomly, then the union of network topologies in ${\mathcal S}_{\mathcal G}$ will govern the asymptotic dynamics in a sufficiently long finite time interval, due to the law of large numbers. Hence, we need to assume that the union graph contains a spanning tree so that any pair of two particles will communicate eventually in a finite-time interval. These heuristic arguments can be made rigorously in Section \ref{sec:3.1}. Then under these assumptions on the network topologies and random switching times, if the time-step $h$, choice probability $p_k$ for the network topology ${\mathcal G}_k$ and communication weight $\phi$ satisfy a set of assumptions:
\[   0 < h\kappa < 1, \quad \frac{ \log\left(\frac{1}{1-h\kappa}\right)}{\min\limits_{1\leq k\leq N_G}\log\frac{1}{1-p_k}} \ll 1, \quad \frac{1}{\phi(r)}=O(r^{ \varepsilon })\quad\mbox{as}\quad r\to\infty,    \]
where $\varepsilon$ is a small positive constant to be determined later, then we can derive asymptotic flocking with probability one (see Theorem \ref{T3.1}):
\[\mathbb{P}\Big( \omega : \exists \ x^\infty>0 \mbox{ s.t } \sup_{0\le t < \infty }\D(X[t,\omega]) \le x^\infty, \mbox{ and } \lim_{t \to \infty} \D(V[t,\omega]) =0 \Big) = 1. \]

The rest of this paper is organized as follows. In Section 2, we briefly review basic terminologies and lemmas to be used throughout this paper, such as directed graphs, scrambling matrices, and their basic properties. We also provide a monotonicity lemma for velocity diameter. In Section \ref{sec:3}, we provide a framework and our main results of this paper. In Section \ref{sec:4}, we provide technical proof for our main result. In Section \ref{sec:5}, we consider two concrete stochastic processes for dwelling times (Poisson process and geometric process) and then show that they satisfy the proposed framework which results in the asymptotic flocking. We also discuss some improvements for the continuous model by removing the compact support assumption on the probability density function of switching times. Finally, Section \ref{sec:6} is devoted to a summary of our main results and some future issues to be discussed later.

\vspace{0.5cm}

\noindent {\bf Notation} Throughout the paper, $(\Omega, \mathscr{F}, \mathbb{P}$) denotes a generic probability space. For matrices $A = (a_{ij})_{N\times N}$ and $B = (b_{ij})_{N\times N}$, we denote $A \ge B$ if $a_{ij} \ge b_{ij}$ for all $i$ and $j$. Moreover, we define the Frobenius norm of $A$ as follows.
\[ \| A \|_F := \sqrt{\mbox{tr}(A^* A)}, \]
and $I_d = \mbox{diag}(1, \cdots, 1)$ is the $d \times d$ identity matrix.

\section{Preliminaries}\label{sec:2}
\setcounter{equation}{0}
In this section, we briefly review basic materials on the directed graphs and matrix algebra, and then study the dissipative structure of system \eqref{CS_d}. 

\subsection{Directed graphs and matrix algebra} \label{sec:2.1}
For the modeling of network topology, we use directed graphs (digraphs). A digraph $\mathcal{G} = (\mathcal{V}, \mathcal{E})$ consists of a vertex set $\mathcal{V} = \{1, \cdots, N\}$  and an edge set $\mathcal{E} \subset \mathcal{V} \times \mathcal{V}$ consisting of ordered connected pairs of vertices. We say $j$ is a neighbor of $i$ if $(j,i)\in \mathcal{E}$ and $\mathcal{N}_i := \{ j : (j,i)\in \mathcal{E}\}$ denotes the neighbor set of $i$. If $(i,i)\in \mathcal{E}$, then $\mathcal{G}$ is said to have a self-loop at $i$. For a given digraph $\mathcal{G} = (\mathcal{V}, \mathcal{E})$, we consider its 0-1 adjacency matrix $\chi = (\chi_{ij})$, where
\[\chi_{ij} := \left\{\begin{array}{ccc}
1 & \mbox{ if } & (j,i)\in \mathcal{E},\\
0 & \mbox{ if } & (j,i)\notin \mathcal{E}.
\end{array}\right.\]
A path in $\mathcal{G}$ from $i$ to $j$ is a sequence of distinct vertices $i_0 = i$, $i_1$, $\cdots$, $i_n = j$ such that  $(i_{m-1}, i_m) \in \mathcal{E}$ for every $1 \le m \le n$. If there is a path from $i$ to $j$, then we say $j$ is reachable from $i$. Moreover, a digraph $\mathcal{G}$ is said to have a spanning tree (or rooted) if $\mathcal{G}$ has a vertex $i$ from which any other vertices are reachable. 

Next, we review some concepts and results in matrix algebra which will be crucially used in later sections.  First, we introduce several concepts of nonnegative matrices in the following definition.
\begin{definition} \label{D2.1}
Let $A=(a_{ij})$ be a nonnegative $N \times N$ matrix.
\begin{enumerate}
\item
$A$ is a {\em stochastic} matrix, if its row-sum is equal to unity:
\[ \sum_{j=1}^{N} a_{ij} = 1, \quad 1 \leq i \leq N. \]
\item
$A$ is a {\em scrambling} matrix, if for each pair of indices $i$ and $j$, there exists an index $k$ such that $a_{ik}>0$ and $a_{jk}>0$.
\item
$A$ is a weighted {\em adjacency matrix} of a digraph $\G$ if the following holds:
\[ a_{ij}>0 \quad \Longleftrightarrow \quad (j,i)\in\E. \]
In this case, we write $\G=\G(A)$.
\item
The {\em ergodicity coefficient} of $A$  is defined as follows.
\begin{equation*} \label{B-1}
\mu(A):=\min_{i,j}\sum_{k=1}^N \min\{a_{ik},a_{jk}\}.
\end{equation*}
\end{enumerate}
\end{definition}
\begin{remark}
The following assertions easily follow from the above definition:
\begin{enumerate}
\item
$A$ is scrambling if and only if $\mu(A)>0$.
\item
For nonnegative matrices $A$ and $B$,
\[
 A\geq B \quad \Longrightarrow \quad \mu(A)\geq\mu(B).
\]
\end{enumerate}
\end{remark}

In the sequel, we present two lemmas without proofs. These lemmas will be crucially used in next sections.  

\begin{lemma}\label{L2.1}
\emph{(Lemma 2.2, \cite{D-H-K})}
Suppose that a nonnegative $N\times N$ matrix $A=(a_{ij})$ is stochastic, and let $B=(b_i^j)$, $Z=(z_i^j)$ and $W=(w_i^j)$ be $N\times d$ matrices such that
\[
 W = AZ+B.
\]
Then, we have
\begin{equation*}
\max_{i,k}\|w_i-w_k\|\leq(1-\mu(A))\max_{l,m}\|z_l-z_m\|+\sqrt{2}\|B\|_F,
\end{equation*}
where
\[
z_i:=(z_i^1,\cdots,z_i^d), \quad b_i:=(b_i^1,\cdots,b_i^d), \quad w_i:=(w_i^1,\cdots,w_i^d), i=1,\cdots,N.
\]
\end{lemma}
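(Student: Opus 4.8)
The plan is to estimate $\|w_i - w_k\|$ for a fixed pair of indices $i,k$ and then take the maximum at the end. Writing the identity $W = AZ + B$ row by row gives $w_i = \sum_{j=1}^N a_{ij} z_j + b_i$ for each $i$, hence
\[
w_i - w_k = \sum_{j=1}^N (a_{ij} - a_{kj}) z_j + (b_i - b_k).
\]
The contribution of $B$ is handled immediately: since $\|B\|_F^2 = \sum_{m=1}^N \|b_m\|^2$, the triangle inequality together with the elementary bound $\|b_i\| + \|b_k\| \le \sqrt{2(\|b_i\|^2 + \|b_k\|^2)}$ yields $\|b_i - b_k\| \le \sqrt{2}\,\|B\|_F$. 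So the whole task reduces to showing that the first sum is bounded by $(1-\mu(A))\max_{l,m}\|z_l - z_m\|$.

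For that term I would use the classical ergodicity-coefficient (Dobrushin) argument. Fix $i,k$, set $c_j := \min\{a_{ij}, a_{kj}\}$, and define $\alpha_j := a_{ij} - c_j \ge 0$, $\beta_j := a_{kj} - c_j \ge 0$; note that for every $j$ at least one of $\alpha_j,\beta_j$ vanishes. Since $A$ is stochastic and $\sum_j c_j = \mu_{ik} := \sum_{l=1}^N \min\{a_{il},a_{kl}\} \ge \mu(A)$, we get $\sum_j \alpha_j = \sum_j \beta_j = 1 - \mu_{ik}$. Because $\sum_j (a_{ij} - a_{kj}) = 0$, we may freely replace $a_{ij}-a_{kj}$ by $\alpha_j - \beta_j$ in the sum above without changing it.

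The key algebraic step is the identity, valid whenever $\mu_{ik} < 1$,
\[
\sum_{j=1}^N (\alpha_j - \beta_j) z_j = \frac{1}{1-\mu_{ik}} \sum_{j,l=1}^N \alpha_j \beta_l \, (z_j - z_l),
\]
which follows by inserting $\sum_l \beta_l = 1-\mu_{ik}$ into the first term and $\sum_j \alpha_j = 1-\mu_{ik}$ into the second (when $\mu_{ik}=1$ one has $\alpha_j\equiv\beta_j\equiv 0$, so both sides vanish and the estimate below is trivial). Taking norms and using $\sum_{j,l}\alpha_j\beta_l = (1-\mu_{ik})^2$,
\[
\Big\| \sum_{j=1}^N (a_{ij}-a_{kj}) z_j \Big\| \le \frac{1}{1-\mu_{ik}} \sum_{j,l}\alpha_j\beta_l \,\|z_j - z_l\| \le (1-\mu_{ik})\max_{l,m}\|z_l - z_m\| \le (1-\mu(A))\max_{l,m}\|z_l - z_m\|,
\]
where the last inequality uses $\mu_{ik} \ge \mu(A)$. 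Combining this with the bound on $\|b_i-b_k\|$ and then maximizing over $i,k$ gives the assertion.

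I do not anticipate any real obstacle here: the only point requiring care is the rearrangement in the displayed identity — isolating the "common mass" $c_j$ so that the residual row-weights $\alpha_j$ and $\beta_j$ each carry total mass exactly $1-\mu_{ik}$, which is precisely what lets the $1-\mu_{ik}$ factor come out — together with remembering to dispose of the degenerate case $\mu_{ik}=1$ separately so as not to divide by zero. Everything else is the triangle inequality and the definition of the Frobenius norm.
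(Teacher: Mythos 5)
Your argument is correct: the row-wise decomposition $w_i-w_k=\sum_j(a_{ij}-a_{kj})z_j+(b_i-b_k)$, the bound $\|b_i-b_k\|\le\sqrt{2(\|b_i\|^2+\|b_k\|^2)}\le\sqrt{2}\|B\|_F$, and the Dobrushin-type splitting $a_{ij}-a_{kj}=\alpha_j-\beta_j$ with the identity $\sum_j(\alpha_j-\beta_j)z_j=\frac{1}{1-\mu_{ik}}\sum_{j,l}\alpha_j\beta_l(z_j-z_l)$ all check out, including your handling of the degenerate case $\mu_{ik}=1$. Note that the paper itself states this lemma without proof, citing Lemma 2.2 of \cite{D-H-K}; the argument given there is exactly this classical ergodicity-coefficient computation, so your proof matches the intended one.
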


\begin{lemma}\label{L2.2}
\emph{(Theorem 5.1, \cite{Wu06})}
If $A_i$ are nonnegative $N\times N$ matrices with positive diagonal
elements such that $\G(A_i)$ has a spanning tree for all $1\leq i\leq N-1$, then $A_1A_2\ldots
A_{N-1}$ is scrambling.
\end{lemma}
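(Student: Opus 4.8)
Before seeing the argument, here is how I would attack Lemma~\ref{L2.2}.

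The plan is to recast ``scrambling'' of $P:=A_1A_2\cdots A_{N-1}$ as a reachability statement and then run a counting argument on a \emph{pair} of index sets. For a vertex $i$ and $0\le\ell\le N-1$, I would set $R_\ell(i):=\{\,k:(A_1\cdots A_\ell)_{ik}>0\,\}$ with the convention $R_0(i):=\{i\}$. Since $(A_1\cdots A_{N-1})_{ik}>0$ is the same as $k\in R_{N-1}(i)$, the matrix $P$ is scrambling precisely when $R_{N-1}(i)\cap R_{N-1}(j)\neq\emptyset$ for every pair $i,j$. So the whole lemma reduces to proving that these pairwise intersections are nonempty.

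First I would record two elementary facts. (i) Monotonicity: since each $A_\ell$ has positive diagonal, $(A_1\cdots A_{\ell+1})_{ik}\ge (A_1\cdots A_\ell)_{ik}(A_{\ell+1})_{kk}$, so $R_\ell(i)\subseteq R_{\ell+1}(i)$. (ii) One-step description: $(A_1\cdots A_{\ell+1})_{ik}>0$ iff $(A_{\ell+1})_{mk}>0$ for some $m\in R_\ell(i)$, so in the digraph $\mathcal{G}(A_{\ell+1})$ the set $R_{\ell+1}(i)$ consists of $R_\ell(i)$ together with every vertex having an outgoing edge into $R_\ell(i)$. Next comes the crucial growth lemma: if $S:=R_\ell(i)$ is a nonempty proper subset of $\mathcal{V}$ that does \emph{not} contain a root $r$ of a spanning tree of $\mathcal{G}(A_{\ell+1})$, then $R_{\ell+1}(i)\supsetneq S$. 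I would prove this by contradiction: if no vertex of $\mathcal{V}\setminus S$ had an edge into $S$, then a directed path in $\mathcal{G}(A_{\ell+1})$ from $r\notin S$ to a vertex of $S$ — which exists by the spanning-tree hypothesis — would be forced to use an edge from $\mathcal{V}\setminus S$ into $S$, a contradiction; hence some outside vertex feeds into $S$ and $R_{\ell+1}(i)$ strictly enlarges.

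With these in hand, fix $i\neq j$ and suppose, toward a contradiction, that $R_{N-1}(i)\cap R_{N-1}(j)=\emptyset$. By monotonicity, $R_\ell(i)\cap R_\ell(j)=\emptyset$ for every $0\le\ell\le N-1$, and in particular both sets are nonempty and proper at each such $\ell$. For each $0\le\ell\le N-2$ the root of $\mathcal{G}(A_{\ell+1})$ lies in at most one of these two disjoint sets, so the growth lemma applies to the other; combined with monotonicity of both, this gives $|R_{\ell+1}(i)|+|R_{\ell+1}(j)|\ge |R_\ell(i)|+|R_\ell(j)|+1$. Iterating $N-1$ times from $|R_0(i)|+|R_0(j)|=2$ yields $|R_{N-1}(i)|+|R_{N-1}(j)|\ge N+1$, which is impossible for two disjoint subsets of an $N$-element set. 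Hence $R_{N-1}(i)\cap R_{N-1}(j)\neq\emptyset$; this is trivially true for $i=j$ as well, so $A_1A_2\cdots A_{N-1}$ is scrambling.

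The one genuinely delicate point is the growth lemma together with the decision to iterate it on a \emph{pair} of sets rather than on a single reachability set: a single $R_\ell(i)$ can fail to grow — precisely when it already contains the current root — so a one-vertex-at-a-time argument breaks down, and it is exactly the disjointness of $R_\ell(i)$ and $R_\ell(j)$ that forces at least one of the two to keep expanding at every step until they meet. The bound $N-1$ on the number of factors is then what makes the pigeonhole count close.
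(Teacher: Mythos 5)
The paper does not prove this lemma at all --- it is imported verbatim as Theorem 5.1 of the cited reference \cite{Wu06}, with the proof explicitly omitted --- so there is no in-paper argument to compare against. Your proof is correct and self-contained: the reformulation of scrambling as $R_{N-1}(i)\cap R_{N-1}(j)\neq\emptyset$, the growth lemma for a set missing a root, and the pigeonhole count $|R_{\ell+1}(i)|+|R_{\ell+1}(j)|\ge |R_\ell(i)|+|R_\ell(j)|+1$ on the disjoint pair all check out (including the edge-orientation bookkeeping under the paper's convention $a_{ij}>0\iff(j,i)\in\mathcal{E}$, and the observation that fixing any single root of $\mathcal{G}(A_{\ell+1})$ suffices even when roots are not unique). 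This is essentially the standard argument for products of rooted nonnegative matrices with positive diagonals, in the spirit of the cited source.
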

\vspace{0.2cm}

\subsection{Matrix formulation of the DCS model} \label{sec:2.2} In this subsection, we provide a priori estimate for the velocity diameter along the sample path. For convenience, we suppress $\omega$-dependence in the sequel. First, we consider the R.H.S. of $\eqref{CS_d}_2$:
\begin{align}
\begin{aligned} \label{B-1}
\mbox{RHS}_2 &:= v_i[t] + \frac{h}{N}\sum_{j=1}^N \chi_{ij}^{\sigma[t]} \phi(\|x_j[t]-x_i[t]\|)(v_j[t] - v_i[t]) \\
&= \Big( 1 - \frac{h}{N} \sum_{j=1}^N \chi_{ij}^{\sigma[t]} \phi( \| x_j[t] - x_i[t] \|) + \frac{h}{N} \chi_{ii}^{\sigma[t]} \phi(\| x_j[t] - x_i[t] \|) \Big) v_i[t]  \\
&\hspace{0.2cm} + \frac{h}{N} \sum_{j \not = i} \chi_{ij}^{\sigma[t]} \phi( \| x_j[t] - x_i[t] \|) v_j[t].
\end{aligned}
\end{align}
To rewrite \eqref{B-1} as a matrix form, we introduce $N\times N$ matrix $ {L_k}[t]$ $(k=1,\cdots,N_G)$:
\[
  L_{k }[t]:=  D_{k }[t]-  A_{k }[t],
\]
where the matrices $A_{k }[t]$ and $D_{k }(t)$ are defined as follows.
\begin{align}
\begin{aligned} \label{B-2}
A_{k }[t] &=\left(  a_{ij}^{k}[t]\right), \quad D_{k }(t)=\mbox{diag}\left(  d_1^{k}[t],\cdots, d_N^{k }[t]\right), \\
a_{ij}^{k }[t] &:=\chi_{ij}^{k}\phi(\|x_i[t]-x_j[t]\|)\quad\mbox{and}\quad   d_i^{k }[t]=\sum_{j=1}^N \chi_{ij}^{k}\phi(\|x_i[t]-x_j[t]\|).
\end{aligned}
\end{align}
Finally, we combine \eqref{B-1} and \eqref{B-2} to rewrite system \eqref{CS_d} as a matrix form:
\begin{align}
\begin{cases} \label{B-4}
\displaystyle X[t+1] = X[t] + hV[t], \quad t = 0, 1, \cdots, \\
\displaystyle V[t+1] = \left(Id - \frac{h}{N} L_{\sigma[t]}[t]\right)V[t],
\end{cases}
\end{align}
and define
\begin{equation}\label{B-5}
\Phi[s_1, s_0,\omega] := \prod_{t=s_0}^{s_1-1} \left(Id - \frac{h}{N}L_{\sigma[t,\omega]}\right).
\end{equation}
Next, we study the monotonicity of $\D(V[t])$.
\begin{lemma}\label{L2.3}
Suppose that the time-step and coupling strength satisfy
 \[0 < h\kappa <1, \]
 and let $(X[t], V[t])$ be a solution to \eqref{B-4}. Then for each fixed $\omega\in\Omega$, we have
\[
\D(V[t+1]) \le \D(V[t]), \quad \D(X[t+1]) \le \D(X[t]) + h \D(V[t]),
\]
for any $t \in \bbn \cup \{0\}$.
\end{lemma}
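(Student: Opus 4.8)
The plan is to prove the two inequalities separately, with the velocity estimate being the substantive one and the position estimate following immediately from the triangle inequality. For the position diameter, observe that from $\eqref{B-4}_1$ we have $x_i[t+1] - x_j[t+1] = (x_i[t] - x_j[t]) + h(v_i[t] - v_j[t])$, so $\|x_i[t+1] - x_j[t+1]\| \le \|x_i[t] - x_j[t]\| + h\|v_i[t] - v_j[t]\| \le \D(X[t]) + h\D(V[t])$, and taking the maximum over $i,j$ gives the second claim. No smallness on $h\kappa$ is needed here.

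For the velocity diameter, the key point is to recognize that, under the hypothesis $0 < h\kappa < 1$, the matrix $Id - \frac{h}{N}L_{\sigma[t]}[t]$ appearing in $\eqref{B-4}_2$ is a (nonnegative) stochastic matrix. Indeed, its off-diagonal $(i,j)$ entry is $\frac{h}{N}a_{ij}^{\sigma[t]}[t] = \frac{h}{N}\chi_{ij}^{\sigma[t]}\phi(\|x_i[t]-x_j[t]\|) \ge 0$, and its diagonal $(i,i)$ entry is $1 - \frac{h}{N}\sum_{j\neq i}\chi_{ij}^{\sigma[t]}\phi(\|x_i[t]-x_j[t]\|)$; since $\chi_{ij}^{\sigma[t]} \in \{0,1\}$ and $0 < \phi \le \kappa$, the subtracted sum is at most $\frac{h}{N}\cdot N \cdot \kappa = h\kappa < 1$, so the diagonal entries are positive, hence all entries are nonnegative. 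The row sums are clearly $1$ because each row of $L_{\sigma[t]}[t] = D_{\sigma[t]}[t] - A_{\sigma[t]}[t]$ sums to zero by the definition of $D_k$ in \eqref{B-2}. Then I apply Lemma \ref{L2.1} with $A = Id - \frac{h}{N}L_{\sigma[t]}[t]$, $Z = V[t]$, $B = 0$, and $W = V[t+1]$: this yields $\D(V[t+1]) \le (1 - \mu(A))\D(V[t]) \le \D(V[t])$, since $\mu(A) \ge 0$ for any nonnegative matrix.

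I expect the main (minor) obstacle to be the verification that the diagonal entries remain strictly positive, which is precisely where the hypothesis $h\kappa < 1$ is used and why it is stated; one must be careful that the bound uses $\phi \le \kappa$ on all $N$ terms (including, harmlessly, a possible self-loop term). Everything else is bookkeeping. An alternative route that avoids invoking Lemma \ref{L2.1} directly would be to fix indices $i, j$ realizing $\D(V[t+1])$, write $v_i[t+1] - v_j[t+1]$ as a difference of two convex combinations of the vectors $\{v_l[t]\}$ (using stochasticity of the rows), and bound the result by $\D(V[t])$ via the standard convexity argument; but since Lemma \ref{L2.1} is already available in the paper, the cleanest presentation is to simply quote it with $B = 0$.
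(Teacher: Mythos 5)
Your proof is correct and follows essentially the same route as the paper: verify that $Id - \frac{h}{N}L_{\sigma[t]}[t]$ is nonnegative and stochastic using $0<\phi\le\kappa$ and $h\kappa<1$, then apply Lemma \ref{L2.1} with $B=0$ and the nonnegativity of the ergodicity coefficient, with the position bound following directly from $\eqref{B-4}_1$. Your diagonal-entry computation (the self-loop term cancelling so that only the $j\neq i$ sum is subtracted) is in fact slightly sharper than needed, but the argument is the same as the paper's.
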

\begin{proof}
First, we use 
\[ \phi(r) \leq \phi(0) = \kappa \quad \mbox{in \eqref{comm} and} \quad  h\kappa <1 \]
to see that for each $i=1, \cdots ,N$ and $k=1,\cdots, N_G$,
\[\frac{h}{N}d_i^k[t] = \frac{h}{N}\sum_{j=1}^N \chi_{ij}^k \phi(\|x_i[t]-x_j[t]\|) \leq h \kappa  <1. \]
This implies that $Id - \frac{h}{N} L_{\sigma[t]}[t]$ is always nonnegative and its low sum is one, i.e., stochastic. Since the ergodicity coefficient of any nonnegative matrices is nonnegative, we apply Lemma \ref{L2.1} to get
\[ \D(V[t+1]) \le \left(1-\mu\left(Id - \frac{h}{N} L_{\sigma[t]}[t]\right)\right) \D(V[t]) \le \D(V[t]).\]
The other estimate for the spatial diameter readily follows from $\eqref{B-4}_1$.
\end{proof}

\vspace{0.4cm}

\section{A framework and main result} \label{sec:3}
\setcounter{equation}{0}
In this section, we briefly present a framework for stochastic flocking in terms of admissible network topologies and switching times, and state our main result.

\subsection{Problem setup}  \label{sec:3.1}
Let  $(\Omega, \mathscr{F}, \mathbb{P}$) be a probability space. Then, we set the sequence of (possibly) switching times $\{t_\ell\}_{\ell \ge 0}$ to be a sequence of $\bbn$-valued independent random variables on $(\Omega, \mathscr{F}, \mathbb{P}$). Then, we assume that the switching law $\sigma : \bbn \times \Omega \to \{1, \cdots, N_G\}$ and the sequence $\{t_{\ell+1}-t_\ell \}_{\ell\ge 0}$ of dwelling times satisfy the following conditions: 
\begin{itemize}
\item
For each $\ell \ge 0$, $t_{\ell+1}-t_\ell$ is given by
\begin{equation} \label{C-1} 
t_{\ell+1}-t_\ell = 1 + T_{\ell}, 
\end{equation}
where $\{T_{\ell}\}_{\ell\geq0}$ is a given sequence of independent integer-valued nonnegative random variables.

\vspace{0.2cm}

\item For each $\ell \geq 0$ and $\omega \in \Omega$, $\sigma_t(\omega):= \sigma(t, \omega)$ is constant on the interval $t\in[t_\ell(\omega),t_{\ell+1}(\omega))$.

\vspace{0.2cm}

\item $\{\sigma_{t_{\ell}} \}_{\ell\ge 0}$ is a sequence of i.i.d. random variables such that for any $\ell \geq 0$,
\[
\mathbb P(\sigma_{t_\ell}= k)=p_k,\quad \mbox{ for each } k=1,\cdots,N_G,
\]
where $p_1,\cdots,p_{N_G}>0$ are given positive constants satisfying $p_1+\cdots+p_{N_G}=1$.
\end{itemize}

\vspace{0.2cm}

We briefly comment on the random switching time process $\{t_\ell \}$ as follows. Suppose that we have process at $t = t_\ell$. Then, the next switching time $t_{\ell + 1}$ is determined by the relation \eqref{C-1} via the integer-valued and nonnegative random process $T_\ell$:
\[ t_{\ell + 1} = t_\ell + 1 + T_\ell, \]
In Section \ref{sec:5}, we will consider two explicit processes (Poisson and geometric processes) and show that they satisfy our framework that will be presented soon.  \newline

For each $k=1, \cdots, N_G$, let $\mathcal G_k=(\mathcal V, \mathcal E_k)$ be the $k$-th admissible digraph. Then, for each $t\geq0$ and $\omega\in\Omega$, the time-dependent network topology $(\chi_{ij}^\sigma)=(\chi_{ij}^{\sigma[t,\omega]})$ is determined by

\[\chi_{ij}^{\sigma[t,\omega]} := \left\{\begin{array}{ccc}
1 & \mbox{ if } & (j,i)\in\mathcal E_{\sigma[t,\omega]},\\
0 & \mbox{ if } & (j,i)\notin \mathcal E_{\sigma[t,\omega]}.
\end{array}\right.\]
\vspace{0.1cm}

\noindent For technical reasons and without loss of generality, we assume that each admissible digraph $\mathcal{G}_k$ has a self-loop at each vertex, and we define the union graph of $\mathcal{G}_{\sigma[t,\omega]}$ for $s_0\le t < s_1$ and $\omega\in\Omega$:
\[\mathcal{G}([s_0, s_1))(\omega) := \bigcup_{t=s_0}^{s_1-1}\mathcal{G}_{\sigma[t,\omega]} =\left(\mathcal{V}, \bigcup_{t= s_0}^{s_1-1}\mathcal{E}_{\sigma[t,\omega]}\right).\\ \]
Note that the network topology might remain unchanged at switching instants. In other words, it would be possible that $\sigma_{t_{\ell+1}}(\omega)=\sigma_{t_{\ell}}(\omega)$ for some $\ell\geq0$ and $\omega \in \Omega$.

\subsection{A framework and main result} \label{sec:3.2}
In this subsection, we present a sufficient framework in terms of the structure of admissible network topologies and laws of switching times and our main result. \newline

For each positive integer $n$ and positive real number $c>0$, we define an increasing sequence $\{a_\ell(n,c)\}_{\ell\in\mathbb{N}}$ of integers by the following recursive relation:
\begin{equation}\label{C-2}
a_0(n,c)=0,\qquad a_{\ell+1}(n,c)=a_\ell(n,c)+n+\lfloor c\log (\ell+1)\rfloor, \qquad (\ell\in\mathbb{N}).
\end{equation}
In what follows, we sometimes suppress $n$ and $c$ dependence for $a_\ell(n,c)$:
\[ a_\ell := a_\ell(n,c). \]
We impose the following assumptions $(\mathcal A)$ on the set  ${\mathcal S}_{\mathcal G}$ of admissible digraphs and the probability density function of the switching times:
\begin{itemize}
\item $(\mathcal A1)$: The union digraph from ${\mathcal S}_{\mathcal G}$ has a spanning tree:
\[
\bigcup_{1\leq k\leq N_G}\mathcal{G}_k =\left(\mathcal{V}, \bigcup_{1\leq k\leq N_G}\mathcal{E}_{k}\right)\quad\mbox{has a spanning tree.}
\]
\item 
$(\mathcal A2)$:  The dwelling times are bounded in probability: for some $M\in\bbn$,
\[\mathbb{P}\left(  \omega \ : \ \sum_{\ell=a_{(i-1)(N-1)}}^{a_{i(N-1)}-1} T_\ell(\omega) \ge M(n + \lfloor c \log i(N-1)\rfloor), \quad \mbox{for some } i \in \bbn\right )  \le \tilde{p}(n),  \]
where $\tilde{p}(n) \to 0$ as $n \to \infty$.
\end{itemize}

Now, we are ready to state our main theorem on the emergence of flocking.
\begin{theorem}\label{T3.1}
Suppose that parameters $N$, $h$, $\kappa$, $p_k$'s and communication weight $\phi$ satisfy the following conditions:
\[  0 < h\kappa < 1, \quad \frac{(M+N-1) \log\left(\frac{1}{1-h\kappa}\right)}{\min\limits_{1\leq k\leq N_G}\log\frac{1}{1-p_k}}<1, \quad \frac{1}{\phi(r)}=O(r^{ \varepsilon })\quad\mbox{as}\quad r\to\infty,
\]
where $M$ is a positive constant in $({\mathcal A}2)$ and $\varepsilon$ is a positive constant satisfying
\[  0\leq\varepsilon<\frac{1}{N-1}-\frac{(M+N-1) \log\left(\frac{1}{1-h\kappa}\right) }{(N-1)\min\limits_{1\leq k\leq N_G}\log\frac{1}{1-p_k}},
\]
and let $(X,V)$ be a solution process to \eqref{CS_d}. Then, asymptotic global flocking is exhibited with probability one:
\[\mathbb{P}\Big( \omega : \exists \ x^\infty>0 \mbox{ s.t } \sup_{0\le t < \infty }\D(X[t,\omega]) \le x^\infty, \mbox{ and } \lim_{t \to \infty} \D(V[t,\omega]) =0 \Big) = 1. \]
\end{theorem}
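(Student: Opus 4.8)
The plan is to establish flocking via a block-wise contraction argument on the velocity diameter $\D(V[t])$, using the time partition induced by the sequence $\{a_\ell\}$ defined in \eqref{C-2}. First I would group the switching instants into super-blocks of length $N-1$, i.e.\ consider the intervals $[a_{(i-1)(N-1)}, a_{i(N-1)})$ for $i \in \bbn$. On each such super-block, the goal is to show that, with high probability, the product of transition matrices $\Phi[a_{i(N-1)}, a_{(i-1)(N-1)},\omega]$ from \eqref{B-5} is scrambling with a quantitatively controlled ergodicity coefficient. The mechanism: on a sub-block of length roughly $n + \lfloor c\log \ell\rfloor$ the chance that a \emph{prescribed} graph $\mathcal G_k$ is never selected is at most $(1-p_k)^{n+\lfloor c\log\ell\rfloor}$; summing over $k$ and over the $N-1$ sub-blocks, and using assumption $(\mathcal A2)$ to bound the accumulated dwelling time $\sum T_\ell$, one gets that with probability $\ge 1 - \tilde p(n) - (\text{polynomially decaying in }\ell)$, every admissible graph appears at least once in each sub-block. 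By $(\mathcal A1)$ the union graph has a spanning tree, so within each sub-block the union of realized graphs has a spanning tree, and Lemma \ref{L2.2} then gives that the product over $N-1$ consecutive sub-blocks is scrambling — hence $\mu(\Phi)>0$ on that super-block.

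The second ingredient is a quantitative lower bound on $\mu$ on a scrambling super-block. Each factor $Id-\frac hN L_{\sigma[t]}[t]$ has off-diagonal entries of size $\ge \frac hN\phi(\D(X[t]))$ along realized edges and diagonal entries $\ge 1-h\kappa$; multiplying $a_{i(N-1)}-a_{(i-1)(N-1)}$ such matrices and invoking the scrambling structure, one obtains
\[
\mu\big(\Phi[a_{i(N-1)}, a_{(i-1)(N-1)},\omega]\big) \;\ge\; \Big(\tfrac{h}{N}\Big)^{N-1}\,(1-h\kappa)^{L_i}\,\prod \phi\big(\D(X[t])\big),
\]
where $L_i = a_{i(N-1)}-a_{(i-1)(N-1)}$ is the super-block length, which by construction and $(\mathcal A2)$ is $O\big((M+N-1)(n+\lfloor c\log i(N-1)\rfloor)\big)$. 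Since $\D(X[t])$ grows at most linearly (Lemma \ref{L2.3}), the hypothesis $1/\phi(r)=O(r^\varepsilon)$ makes $\prod\phi(\D(X))$ decay only like a negative power of $i$, while $(1-h\kappa)^{L_i}$ decays like $i^{-(M+N-1)\log(1/(1-h\kappa))/\min_k\log(1/(1-p_k))}$ times a fixed base. Applying Lemma \ref{L2.1} across super-blocks yields
\[
\D\big(V[a_{i(N-1)}]\big) \;\le\; \prod_{j=1}^{i}\big(1-\mu_j(\omega)\big)\;\D(V[0]),
\]
and one needs $\sum_i \mu_i(\omega) = \infty$ a.s.\ to force $\D(V)\to 0$. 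Comparing the exponents, the sharp exponent condition on $\varepsilon$ in the theorem statement is exactly what guarantees $\mu_i \gtrsim i^{-\gamma}$ with $\gamma<1$, so the series diverges. Controlling $\D(X)$ in turn: $\D(X[t+1])\le \D(X[t])+h\D(V[t])$, and once $\sum_t \D(V[t])<\infty$ is obtained (which follows from the geometric-type decay of $\D(V)$ between super-blocks together with Lemma \ref{L2.3} bounding $\D(V)$ within a block by its left endpoint value), the spatial diameter stays bounded, closing the bootstrap.

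Finally, the almost-sure statement requires a Borel–Cantelli step: the "bad" event on super-block $i$ (some graph missing, or $\sum T_\ell$ too large) has probability $\le \tilde p(n) + q_i$ with $\sum_i q_i<\infty$ once $c$ is chosen large enough relative to $N_G$ and $\min_k p_k$ (the $\log(\ell+1)$ term in \eqref{C-2} is precisely there to make $(1-p_k)^{\lfloor c\log\ell\rfloor}$ summable). So on a set of full measure only finitely many super-blocks are bad; discarding them changes $x^\infty$ but not the conclusion, and on the remaining super-blocks the contraction estimate applies. Letting $n\to\infty$ handles the residual $\tilde p(n)$ contribution to the probability of eventual good behavior, upgrading "high probability" to "probability one."

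\textbf{Main obstacle.} The delicate point is the bootstrap coupling between $\D(X)$ and the decay rate of $\D(V)$: $\mu_i$ depends on $\prod\phi(\D(X[t]))$ over a growing block, so a crude linear bound $\D(X[t])\lesssim \D(X[0]) + h\,\D(V[0])\,t$ is not enough unless one simultaneously knows $\D(V[t])$ is already summable. One must run the argument inductively — assume $\D(X[t])\le x^\infty$ on $[0,a_{i(N-1)})$, derive the lower bound on $\mu_{i}$ with $\phi$ evaluated at $x^\infty$, deduce the decay of $\D(V[a_{i(N-1)}])$, then re-bound $\D(X)$ on the next super-block using $\sum h\D(V[t])$ over that block — and check that the resulting $x^\infty$ is finite, which is where the precise strict inequality for $\varepsilon$ (ensuring the decay exponent is genuinely $<1$, so $\sum_i \mu_i=\infty$ and $\sum_i\prod_{j\le i}(1-\mu_j)<\infty$) is essential. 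The probabilistic bookkeeping (choosing $c$ for summability, then sending $n\to\infty$) is routine by comparison.
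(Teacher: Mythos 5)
Your proposal is correct and follows essentially the same route as the paper: the sub-block/super-block decomposition via $\{a_\ell(n,c)\}$, scrambling of the product over $N-1$ sub-blocks via Lemma \ref{L2.2}, the quantitative ergodicity-coefficient lower bound $(1-h\kappa)^{t^*_{r(N-1)}-t^*_{(r-1)(N-1)}}\bigl(\tfrac{h\phi(x^\infty)}{N(1-h\kappa)}\bigr)^{N-1}$, the self-consistent bootstrap for $\D(X)\le x^\infty$ using $1/\phi(r)=O(r^\varepsilon)$, and the final $n\to\infty$ limit to upgrade to probability one. Your added Borel--Cantelli remark (discarding finitely many bad sub-blocks) is a harmless variant the paper does not need, since it bounds $\mathbb{P}(\text{all sub-blocks good})$ from below directly; everything else matches the paper's Steps A--C.
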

\begin{proof}
The proof will be given in next section.
\end{proof}

\section{Emergence of stochastic flocking} \label{sec:4}
\setcounter{equation}{0}
In this section, we provide a sufficient framework for the emergence of flocking in system \eqref{CS_d}, and provide a proof for Theorem \ref{T3.1}. \newline

First, we present a priori assumptions ($\tilde{\mathcal A}$1) - ($\tilde{\mathcal A}$3): for a fixed $\omega\in\Omega$, 
\begin{itemize}
\item
($\tilde{\mathcal A}$1):~there exist $n\in\bbn$, $n>0$ and  $c>0$  such that 
\[ cN\log\left(\frac{1}{1-h\kappa}\right)<1, \]
and the subsequence $\{t_\ell^*\}_{\ell\in\bbn}\subset\{t_\ell\}_{\ell\in\bbn}$ defined by $t_\ell^*:= t_{a_\ell(n,c)}$ in \eqref{C-2} satisfies
\[
 \mathcal G([t_\ell^*,t_{\ell+1}^*))(\omega)~\mbox{has a spanning tree for all $\ell\geq0$}.
\]
\vspace{0.1cm}
\item
($\tilde{\mathcal A}$2):~there exist $n\in\bbn$, $M\in\bbn$, and  $c>0$  such that the random sequence $\{T_{\ell}(\omega)\}_{\ell\ge0}$ satisfy the following boundedness condition:
\begin{equation} \label{D-0}
\sum_{\ell=a_{(i-1)(N-1)}}^{a_{i(N-1)}-1}T_{\ell}(\omega) < M( n + \lfloor c \log i(N-1)\rfloor), \quad \mbox{for each } i \in \bbn. 
\end{equation}
\vspace{0.1cm}
\item
($\tilde{\mathcal A}$3):~the position diameter is uniformly bounded in time:
\begin{equation*} \label{D-5}
\sup_{0\leq t<\infty}\D(X[t,\omega])\leq x^{\infty} < \infty.
\end{equation*}
\end{itemize}
For the proof of Theorem \ref{T3.1}, we split its proof into three steps as follows: \newline
\begin{itemize}
\item
Step A:~A priori assumptions ($\tilde{\mathcal A}$1), ($\tilde{\mathcal A}$2) and ($\tilde{\mathcal A}$3) imply stochastic flocking.
\vspace{0.1cm}
\item
Step B:~Conditions on initial configuration implies the a priori assumption ($\tilde{\mathcal A}$3).
\vspace{0.1cm}
\item
Step C:~Framework $({\mathcal A}1)$-$({\mathcal A}2)$ imply condition on initial configuration and a priori assumptions  ($\tilde{\mathcal A}$1)-($\tilde{\mathcal A}$2).
\end{itemize}

\vspace{0.5cm}

In the following three subsections, we present each step one by one.
\subsection{Step A (A priori assumptions imply stochastic flocking)} \label{sec:4.1} 
In this subsection, we show that the a priori assumptions ($\tilde{\mathcal A}$1), ($\tilde{\mathcal A}$2) and ($\tilde{\mathcal A}$3) imply stochastic flocking. 
\begin{lemma}\label{L4.1}
Suppose that a priori assumptions $(\tilde{\mathcal A}1)$ and $(\tilde{\mathcal A}3)$ hold, and system parameters satisfy 
\[ 0 < h \kappa < 1, \]
and let $(X, V)$ be a solution to  \eqref{CS_d}. Then,  the matrix $\Phi[t_{r(N-1)}^*, t_{(r-1)(N-1)}^*]$ defined in \eqref{B-5} satisfies 
\begin{enumerate}
\item
$\Phi[t_{r(N-1)}^*, t_{(r-1)(N-1)}^*]$ is stochastic for each $r \in \bbn$.
\vspace{0.2cm}
\item
The ergodicity coefficient of $ \Phi[t_{r(N-1)}^*, t_{(r-1)(N-1)}^*]$ satisfies
\[\mu(\Phi[t_{r(N-1)}^*, t_{(r-1)(N-1)}^*]) \ge\left(1-h\kappa \right)^{t_{r(N-1)}^* - t_{(r-1)(N-1)}^*} \left(\frac{h\phi(x^\infty)}{N(1-h\kappa)}\right)^{N-1}, \]
for each $r \in \bbn$.
\end{enumerate}
\end{lemma}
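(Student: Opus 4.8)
The plan is to analyze the product matrix $\Phi[t_{r(N-1)}^*, t_{(r-1)(N-1)}^*]$ by decomposing it into $N-1$ consecutive blocks, each of which will be shown to be the adjacency matrix of a digraph containing a spanning tree, so that Lemma \ref{L2.2} applies. First I would establish claim (1): from the proof of Lemma \ref{L2.3} we already know each factor $Id - \frac{h}{N}L_{\sigma[t]}[t]$ is stochastic (nonnegative with unit row sums, using $0 < h\kappa < 1$), and the product of stochastic matrices is stochastic, so $\Phi[t_{r(N-1)}^*, t_{(r-1)(N-1)}^*]$ is stochastic. This is essentially immediate.

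For claim (2), I would first note that for each integer $m$ with $(r-1)(N-1) \le m \le r(N-1) - 1$, the block $\Phi[t_{m+1}^*, t_m^*] = \prod_{t=t_m^*}^{t_{m+1}^*-1}\bigl(Id - \frac{h}{N}L_{\sigma[t]}[t]\bigr)$ has positive diagonal entries (each factor does, since its $(i,i)$ entry is $1 - \frac{h}{N}\sum_{j\neq i}\chi_{ij}^{\sigma[t]}\phi \ge 1 - h\kappa > 0$), and its off-diagonal sparsity pattern dominates that of the union graph $\mathcal{G}([t_m^*, t_{m+1}^*))(\omega)$: an entry $(i,j)$ with $j \neq i$ is positive whenever there is some intermediate time $t$ at which a directed path $j \to \cdots \to i$ appears through the successive factors. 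More carefully, since every admissible digraph has a self-loop at each vertex, a single edge $(j,i) \in \mathcal{E}_{\sigma[t]}$ at any time $t \in [t_m^*, t_{m+1}^*)$ already forces the $(i,j)$ entry of the block to be positive (the self-loops carry the $j$-th coordinate forward and backward to time $t$). Hence $\mathcal{G}(\Phi[t_{m+1}^*, t_m^*]) \supseteq \mathcal{G}([t_m^*, t_{m+1}^*))(\omega)$, which has a spanning tree by $(\tilde{\mathcal A}1)$. By Lemma \ref{L2.2}, the product of these $N-1$ blocks, namely $\Phi[t_{r(N-1)}^*, t_{(r-1)(N-1)}^*]$, is scrambling, so its ergodicity coefficient is positive; the remaining work is to get the explicit lower bound.

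For the quantitative bound I would track, for a fixed pair $(i,j)$, an explicit common "meeting" column guaranteed by the spanning-tree/scrambling structure: following the chain through the $N-1$ blocks, at each block we use at most one genuine edge of weight $\ge \frac{h}{N}\phi(x^\infty)$ (here $(\tilde{\mathcal A}3)$ enters, giving $\phi(\|x_a[t]-x_b[t]\|) \ge \phi(x^\infty)$ since $\phi$ is monotonically decreasing), while all the remaining steps within each block — there are $t_{r(N-1)}^* - t_{(r-1)(N-1)}^*$ of them in total, minus the $N-1$ used for genuine edges — contribute a self-loop factor of at least $1-h\kappa$ each. Absorbing each genuine-edge weight as $\frac{h\phi(x^\infty)}{N} = (1-h\kappa)\cdot\frac{h\phi(x^\infty)}{N(1-h\kappa)}$ and collecting powers yields exactly
\[ \mu\bigl(\Phi[t_{r(N-1)}^*, t_{(r-1)(N-1)}^*]\bigr) \ge (1-h\kappa)^{t_{r(N-1)}^* - t_{(r-1)(N-1)}^*}\left(\frac{h\phi(x^\infty)}{N(1-h\kappa)}\right)^{N-1}. \]
The main obstacle I anticipate is the bookkeeping in this last step: making precise the claim that between any pair of vertices one can route through the product using at most $N-1$ genuine edges and at least the stated number of self-loop steps, in a way uniform over the pair $(i,j)$, so that the minimum defining $\mu$ indeed has the asserted lower bound. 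This is the standard but somewhat delicate "spanning tree implies scrambling with explicit ergodicity estimate" argument, and care is needed to align the edge weights with the $(1-h\kappa)$ normalization so the exponents come out as stated.
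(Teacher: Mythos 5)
Your part (1) and the overall skeleton match the paper: the same decomposition of $\Phi[t_{r(N-1)}^*, t_{(r-1)(N-1)}^*]$ into $N-1$ blocks $\Phi[t_{m+1}^*,t_m^*]$, the same use of positive diagonals ($\ge 1-h\kappa$) and of the uniform edge weight $\frac{h}{N}\phi(x^\infty)$ coming from $(\tilde{\mathcal A}3)$, and the same appeal to $(\tilde{\mathcal A}1)$ and Lemma \ref{L2.2}. Your entrywise claim for each block is also correct and is exactly what the paper proves, in the form
\[
\Phi[t_{m+1}^*,t_m^*] \;\ge\; (1-h\kappa)^{t_{m+1}^*-t_m^*}\,\frac{h\phi(x^\infty)}{N(1-h\kappa)}\,F_m,
\]
where $F_m$ is the $0$--$1$ adjacency matrix of the union graph $\mathcal{G}([t_m^*,t_{m+1}^*))$ (the paper gets this from the inequality $\prod_{i=1}^m(a\,Id+B_i)\ge a^{m-1}\sum_i B_i$ rather than by exhibiting a single self-loop path, but the content is the same).

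The genuine gap is in the last step, which you yourself flag as the ``main obstacle'': you propose to prove the explicit bound on $\mu$ by tracking, for each pair $(i,j)$, a common meeting column through the $N-1$ blocks, using at most one genuine edge per block. That plan would force you to reprove Lemma \ref{L2.2} in a quantitative, pairwise form, and you have not carried it out; as written, part (2) is not established. The paper's resolution avoids this entirely: multiplying the displayed block inequalities gives
\[
\Phi[t_{r(N-1)}^*, t_{(r-1)(N-1)}^*] \;\ge\; (1-h\kappa)^{t_{r(N-1)}^*-t_{(r-1)(N-1)}^*}\left(\frac{h\phi(x^\infty)}{N(1-h\kappa)}\right)^{N-1}\prod_{m}F_m,
\]
and then one applies Lemma \ref{L2.2} to the $0$--$1$ matrices $F_m$ themselves (not to the blocks): the product $\prod_m F_m$ is scrambling with nonnegative \emph{integer} entries, hence $\mu(\prod_m F_m)\ge 1$, and the monotonicity and positive homogeneity of $\mu$ (Remark \ref{D2.1}) transfer the scalar prefactor directly to $\mu(\Phi)$. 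This one observation replaces all of your deferred bookkeeping; with it, your argument closes, and without it the stated lower bound does not follow from what you have written.
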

\begin{proof}
For each $r \in \bbn$ and $1 \le i \le N-1$, we consider $\Phi[t_{(r-1)(N-1)+i}^*, t_{(r-1)(N-1)+i-1}^*]$. Here, we assume that
\[\{t_\ell\}_{\ell \ge 0} \cap [t_{(r-1)(N-1)+i-1}^*, t_{(r-1)(N-1)+i}^*] = \{ t_{\ell_1}, \cdots, t_{\ell_{q+1}}\}. \]
Then, we get
\[ \Phi[t_{(r-1)(N-1)+i}^*, t_{(r-1)(N-1)+i-1}^*] = \prod_{j=1}^q \Phi[t_{\ell_{j+1}}, t_{\ell_j}],
\]
and for each $j$, we can choose $k_j \in \{1, \cdots, N_G\}$ such that 
\[ \sigma[t, \omega] = k_j \quad \mbox{for all}~~t \in \bbn \cap [t_{\ell_j}, t_{\ell_{j+1}}). \]
Thus, we have
\begin{align*}
\Phi[t_{\ell_{j+1}}, t_{\ell_j}] &= \prod_{t=t_{\ell_j}}^{t_{\ell_{j+1}}-1} \left(Id - \frac{h}{N} L_{k_j} [t]\right) = \prod_{t=t_{\ell_j}}^{t_{\ell_{j+1}}-1} \left(Id - \frac{h}{N} D_{k_j} [t] + \frac{h}{N}A_{k_j}[t]\right)\\
&\ge  \prod_{t=t_{\ell_j}}^{t_{\ell_{j+1}}-1} \left((1-h\kappa)Id + \frac{h}{N}A_{k_j}[t]\right),
\end{align*}
and consequently,
\begin{align*}
&\Phi[t_{(r-1)(N-1)+i}^*, t_{(r-1)(N-1)+i-1}^*] = \prod_{j=1}^q \Phi[t_{\ell_{j+1}}, t_{\ell_j}] \ge \prod_{j=1}^q \prod_{t=t_{\ell_j}}^{t_{\ell_{j+1}}-1} \left( (1-h\kappa)Id + \frac{h}{N}A_{k_j}[t]\right ) \\
& \hspace{1cm}  \ge \left(1-h\kappa\right)^{t_{(r-1)(N-1)+i}^* - t_{(r-1)(N-1)+i-1}^*-1} \frac{h}{N} \sum_{j=1}^q \sum_{t=t_{\ell_j}}^{t_{\ell_{j+1}}-1} A_{k_j}[t]\\
& \hspace{1cm}  \ge \left(1-h\kappa\right)^{t_{(r-1)(N-1)+i}^* - t_{(r-1)(N-1)+i-1}^*} \frac{h\phi(x^\infty)}{N(1-h\kappa)} F_i,
\end{align*}
where $F_i$ denotes the $0$-$1$ adjacency matrix of the graph $\mathcal{G}[t_{(r-1)(N-1)+i-1}^*, t_{(r-1)(N-1)+i}^*)$  and the second inequality follows from the fact:
\[
\begin{split}
\prod_{i=1}^m (a Id + B_i) &= a^m Id + a^{m-1}\sum_{i=1}^m B_i + \cdots \ge a^{m-1}\sum_{i=1}^m B_i, \quad \mbox{for all } a>0, \ B_i \ge 0.
\end{split}\]
 Hence, we have
\begin{align*}
\Phi[t_{r(N-1)}^*, t_{(r-1)(N-1)}^*]&=\prod_{i=1}^{N-1}\Phi[t_{(r-1)(N-1)+i}^*, t_{(r-1)(N-1)+i-1}^*]\\
&\ge \left(1-h\kappa \right)^{t_{r(N-1)}^* - t_{(r-1)(N-1)}^*} \left(\frac{h\phi(x^\infty)}{N(1-h\kappa)}\right)^{N-1} \Big( \prod_{i=1}^{N-1}F_i \Big).
\end{align*}
Since each $F_i$ is the 0-1 adjacency matrix of a graph with a spanning tree, $\prod_{i=1}^{N-1}F_i$ is scrambling by Lemma \ref{L2.2}. Moreover, we have
\[\mu\left(\prod_{i=1}^{N-1}F_i\right) \ge 1. \]
Thus, we can deduce the result (2). For the stochasticity, note that $Id - \frac{h}{N} L_{k} [t]$ is stochastic for any $k = 1, \cdots, N_G$. This gives the stochasticity of $\Phi[t_{\ell_{j+1}}, t_{\ell_j}]$ and consequently, we obtain the desired result.
\end{proof}
Next, we provide the decay estimates for the velocity diameter based on a priori assumptions. In the sequel, we set 
\[ X^0 := X[0], \quad V^0 := V[0]. \]
\begin{proposition}\label{P4.1}
\emph{(Velocity alignment)}
Suppose that $(\tilde{\mathcal A}1)$-$(\tilde{\mathcal A}3)$ hold, and system parameters satisfy 
\[ 0 < h \kappa < 1, \]
and let $(X, V)$ be a solution to  \eqref{CS_d}. Then for each $t \in \bbn \cap [t_{r(N-1)}^*, t_{(r+1)(N-1)}^*)$,
\begin{align*}
\begin{aligned}
&\D(V[t]) \le  \D(V^0) \\
& \hspace{0.2cm} \times \exp\left[ -\left(1-h\kappa\right)^{(M+N-1) (n+c\log(N-1))}\left(\frac{h\phi(x^\infty)}{N(1-h\kappa)}\right)^{N-1} \frac{(r+1)^{1+c (M+N-1) \log(1-h\kappa)}-1}{1+c (M+N-1) \log(1-h\kappa)} \right ].
\end{aligned}
\end{align*}
\end{proposition}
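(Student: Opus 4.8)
The plan is to iterate the estimate from Lemma \ref{L4.1} over the blocks $[t_{r(N-1)}^*, t_{(r+1)(N-1)}^*)$ and then sum up the resulting geometric-type decay. First I would observe that by $\eqref{B-4}_2$ and the definition \eqref{B-5}, for any $r\in\bbn$ one has $V[t_{r(N-1)}^*] = \Phi[t_{r(N-1)}^*, t_{(r-1)(N-1)}^*]\, V[t_{(r-1)(N-1)}^*]$, and more generally $V[t_{r(N-1)}^*]$ is obtained from $V^0$ by applying the product $\prod_{s=1}^{r}\Phi[t_{s(N-1)}^*, t_{(s-1)(N-1)}^*]$. By Lemma \ref{L4.1}, each factor is stochastic, so by Lemma \ref{L2.1} (with $B=0$) we get $\D(V[t_{r(N-1)}^*]) \le \prod_{s=1}^{r}\bigl(1 - \mu(\Phi[t_{s(N-1)}^*, t_{(s-1)(N-1)}^*])\bigr)\,\D(V^0)$. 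Using Lemma \ref{L2.3} to control the diameter at intermediate times $t\in[t_{r(N-1)}^*, t_{(r+1)(N-1)}^*)$ by $\D(V[t_{r(N-1)}^*])$, it suffices to bound the product over $s$ from $1$ to $r$.

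Next I would insert the lower bound for the ergodicity coefficient from Lemma \ref{L4.1}(2), namely $\mu(\Phi[t_{s(N-1)}^*, t_{(s-1)(N-1)}^*]) \ge (1-h\kappa)^{t_{s(N-1)}^* - t_{(s-1)(N-1)}^*}\bigl(\tfrac{h\phi(x^\infty)}{N(1-h\kappa)}\bigr)^{N-1}$, and then I need an upper bound on the exponent $t_{s(N-1)}^* - t_{(s-1)(N-1)}^*$. Recall $t_\ell^* = t_{a_\ell(n,c)}$ and $t_{\ell+1}-t_\ell = 1 + T_\ell$, so $t_{s(N-1)}^* - t_{(s-1)(N-1)}^* = \sum_{\ell = a_{(s-1)(N-1)}}^{a_{s(N-1)}-1}(1+T_\ell)$. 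The number of terms is $a_{s(N-1)} - a_{(s-1)(N-1)} = \sum_{j=(s-1)(N-1)}^{s(N-1)-1}(n + \lfloor c\log(j+1)\rfloor) \le (N-1)(n + c\log(s(N-1)))$ by monotonicity of $\log$, and by the a priori assumption $(\tilde{\mathcal A}2)$, $\sum T_\ell < M(n + \lfloor c\log s(N-1)\rfloor) \le M(n + c\log(s(N-1)))$. Hence $t_{s(N-1)}^* - t_{(s-1)(N-1)}^* \le (M+N-1)(n + c\log(s(N-1)))$. Since $1 - h\kappa \in (0,1)$, this gives
\[
1 - \mu(\Phi[t_{s(N-1)}^*, t_{(s-1)(N-1)}^*]) \le 1 - (1-h\kappa)^{(M+N-1)(n+c\log(s(N-1)))}\Bigl(\tfrac{h\phi(x^\infty)}{N(1-h\kappa)}\Bigr)^{N-1}.
\]
Using $1 - x \le e^{-x}$, the product over $s=1,\dots,r$ is bounded by $\exp\bigl[-\sum_{s=1}^{r}(1-h\kappa)^{(M+N-1)(n+c\log(s(N-1)))}(\tfrac{h\phi(x^\infty)}{N(1-h\kappa)})^{N-1}\bigr]$, and I would pull out the $s$-independent factor $(1-h\kappa)^{(M+N-1)(n+c\log(N-1))}$ from the term $(1-h\kappa)^{(M+N-1)(n+c\log(s(N-1)))} = (1-h\kappa)^{(M+N-1)(n+c\log(N-1))}\cdot(1-h\kappa)^{(M+N-1)c\log s} = (1-h\kappa)^{(M+N-1)(n+c\log(N-1))}\cdot s^{(M+N-1)c\log(1-h\kappa)}$.

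The remaining point is to lower-bound $\sum_{s=1}^{r} s^{\beta}$ where $\beta := c(M+N-1)\log(1-h\kappa) < 0$. The standard integral comparison gives $\sum_{s=1}^{r} s^{\beta} \ge \int_{1}^{r+1} x^{\beta}\,dx = \frac{(r+1)^{1+\beta} - 1}{1+\beta}$ (valid when $1+\beta \ne 0$; the sign works out because $s^\beta$ is decreasing in $s$). Here I should note that condition $(\tilde{\mathcal A}1)$ guarantees $cN\log\frac{1}{1-h\kappa}<1$, which in particular keeps $1+\beta$ from being problematic in the regime of interest — this is where I would need to check the inequality $1 + c(M+N-1)\log(1-h\kappa)$ has the right sign or at least that the displayed formula is interpreted correctly; strictly, $1+\beta$ could be negative, in which case $\frac{(r+1)^{1+\beta}-1}{1+\beta}$ is still a correct lower bound since both numerator and denominator are negative and $(r+1)^{1+\beta}\to 0$. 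Substituting $\beta = c(M+N-1)\log(1-h\kappa)$ and combining everything yields exactly the claimed bound; bounding $\D(V[t])$ at non-block times by monotonicity (Lemma \ref{L2.3}) finishes the proof. The main obstacle I anticipate is handling the book-keeping of the exponents $t_{s(N-1)}^* - t_{(s-1)(N-1)}^*$ carefully — in particular making the step-count estimate $a_{s(N-1)} - a_{(s-1)(N-1)} \le (N-1)(n+c\log(s(N-1)))$ and the floor-function bookkeeping in $(\tilde{\mathcal A}2)$ line up cleanly — together with getting the direction of the integral comparison right when $1+\beta<0$.
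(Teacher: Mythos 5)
Your proposal is correct and follows essentially the same route as the paper's proof: iterate the ergodicity-coefficient bound of Lemma \ref{L4.1} over the blocks $[t_{(s-1)(N-1)}^*, t_{s(N-1)}^*]$, bound each block length by $(M+N-1)(n+c\log(s(N-1)))$ using the definition of $a_\ell$ together with $(\tilde{\mathcal A}2)$, apply $1-x\le e^{-x}$, and lower-bound the resulting sum $\sum_{s=1}^r s^{c(M+N-1)\log(1-h\kappa)}$ by the integral $\int_1^{r+1}x^{c(M+N-1)\log(1-h\kappa)}\,dx$. The only (harmless) addition is your remark on the sign of $1+c(M+N-1)\log(1-h\kappa)$, which the paper does not discuss at this point since that quantity is arranged to be positive in the proof of Theorem \ref{T3.1}.
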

\begin{proof}
For each $r \ge 0$, we use Lemma \ref{L4.1} to get
\begin{align*}
\D(V[t]) &\le \D(V[t_{r(N-1)}^*])\\
&\le \left(1-\mu\left(\Phi[t_{r(N-1)}^*, t_{(r-1)(N-1)}^*]\right)\right) \D(V[t_{(r-1)(N-1)}^*])\\
&\le \prod_{i=1}^r \left(1-\mu\left(\Phi[t_{i(N-1)}^*, t_{(i-1)(N-1)}^*]\right)\right) \D(V^0)\\
&\le \prod_{i=1}^r \left(1-\left( 1-h\kappa \right)^{t_{r(N-1)}^* - t_{(r-1)(N-1)}^*} \left(\frac{h\phi(x^\infty)}{N(1-h\kappa)}\right )^{N-1}\right) \D(V^0).
\end{align*}
Note that
\begin{align*}
t_{i(N-1)}^*  - t_{(i-1)(N-1)}^* & = \sum_{\ell=a_{(i-1)(N-1)}}^{a_{i(N-1)}-1} (t_{\ell+1} - t_\ell) =a_{i(N-1)} - a_{(i-1)(N-1)} + \sum_{\ell=a_{(i-1)(N-1)}}^{a_{i(N-1)}-1} T_{\ell} \\
&\le (M+N-1)(n+\lfloor c \log i(N-1)\rfloor).
\end{align*}
This follows from the definition of the sequence $a_\ell$ and the a priori assumption. Thus, we have
\begin{align*}
&\D(V[t]) \le \prod_{i=1}^r \left[ 1-\left(1-h\kappa \right)^{(M+N-1)(n+\lfloor c\log i(N-1)\rfloor)} \left(\frac{h\phi(x^\infty)}{N(1-h\kappa)}\right)^{N-1}\right ]\D(V^0)\\
&\le  \prod_{i=1}^r \exp\left[ -\left(1-h\kappa \right)^{(M+N-1)(n+ c\log i(N-1))} \left(\frac{h\phi(x^\infty)}{N(1-h\kappa)}\right)^{N-1}\right] \D(V^0)\\
&= \exp\left[ -\left(1-h\kappa\right)^{(M+N-1)(n+c\log(N-1))}\left(\frac{h\phi(x^\infty)}{N(1-h\kappa)}\right)^{N-1} \sum_{i=1}^r i^{c(M+N-1)\log(1-h\kappa)}\right ] \D(V^0)\\
&\le \exp\left[ -\left(1-h\kappa\right)^{(M+N-1)(n+c\log(N-1))}\left(\frac{h\phi(x^\infty)}{N(1-h\kappa)}\right)^{N-1}\int_1^{r+1} x^{c (M+N-1) \log(1-h\kappa)} dx \right ] \D(V^0)\\
&= \exp\left[ -\left(1-h\kappa\right)^{(M+N-1)(n+c\log(N-1))}\left(\frac{h\phi(x^\infty)}{N(1-h\kappa)}\right)^{N-1} \frac{(r+1)^{1+c (M+N-1)\log(1-h\kappa)}-1}{1+c (M+N-1)\log(1-h\kappa)} \right ] \D(V^0).
\end{align*}
This leads to the desired estimate.
\end{proof}
\vspace{0.3cm}

\subsection{Step B (Pathwisely, $(\tilde{\mathcal A}1)$ and $(\tilde{\mathcal A}2)$ imply flocking)} \label{sec:4.2}
In this subsection, we show that the spatial diameter is uniformly bounded. For this, we present a simple lemma without a proof.
\begin{lemma}\emph{\cite{D-H-J-K}}\label{L4.2}
For any $x>0$ and $\delta>0$, we have the following inequality:
\[
e^{-x}\leq \bigg(\frac{\delta}{e}\bigg)^\delta x^{-\delta}.
\]
\end{lemma}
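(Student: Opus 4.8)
\textbf{Proof plan for Lemma \ref{L4.2}.}
The claim is the elementary inequality $e^{-x}\leq (\delta/e)^\delta x^{-\delta}$ valid for all $x>0$ and $\delta>0$. The natural strategy is to rearrange the inequality into an equivalent single-variable statement and then verify it by a routine calculus argument. Multiplying both sides by $x^\delta>0$, the assertion becomes
\[
x^\delta e^{-x}\leq\Big(\frac{\delta}{e}\Big)^\delta=\frac{\delta^\delta}{e^\delta},
\]
so it suffices to show that the function $g(x):=x^\delta e^{-x}$ attains its maximum over $(0,\infty)$ at a value no larger than $\delta^\delta e^{-\delta}$. In fact I expect equality at the maximizer, so the plan is to locate the critical point of $g$ and evaluate.

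First I would differentiate (for instance via the logarithmic derivative, which is cleaner since $g>0$ on $(0,\infty)$): writing $\log g(x)=\delta\log x - x$, we get $\frac{g'(x)}{g(x)}=\frac{\delta}{x}-1$, so $g'(x)=0$ precisely when $x=\delta$. Since $\frac{\delta}{x}-1>0$ for $x<\delta$ and $<0$ for $x>\delta$, the function $g$ is increasing on $(0,\delta)$ and decreasing on $(\delta,\infty)$, hence $x=\delta$ is the global maximum. Evaluating gives
\[
\max_{x>0}g(x)=g(\delta)=\delta^\delta e^{-\delta}=\Big(\frac{\delta}{e}\Big)^\delta.
\]
Combining this with $g(x)=x^\delta e^{-x}$ yields $x^\delta e^{-x}\leq(\delta/e)^\delta$ for all $x>0$, and dividing through by $x^\delta$ recovers the stated inequality.

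There is essentially no genuine obstacle here; the only point requiring a word of care is justifying that the interior critical point is indeed the global maximum rather than merely a local one. This is handled cleanly by the monotonicity analysis above (increasing then decreasing), together with the boundary behavior $g(x)\to 0$ as $x\to 0^+$ and as $x\to\infty$, which confirms the supremum is attained in the interior and equals $g(\delta)$. An alternative, equally short route avoids calculus entirely: taking logarithms reduces the claim to $\delta\log x - x\leq \delta\log\delta-\delta$, i.e. $\delta\log(x/\delta)\leq x-\delta$, which is immediate from the standard inequality $\log u\leq u-1$ applied with $u=x/\delta$. I would likely present the logarithmic-inequality version as the cleanest proof, since it is self-contained and requires no discussion of maximizers.
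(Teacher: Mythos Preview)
Your proof is correct; both the calculus maximization argument and the logarithmic-inequality route are valid and complete. The paper itself does not supply a proof for this lemma: it is stated as a citation from \cite{D-H-J-K} and explicitly introduced as ``a simple lemma without a proof,'' so your write-up actually provides more detail than the paper does.
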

Now, we show that the a priori assumption $(\tilde{\mathcal A}3)$ on the position diameter can be replaced by the assumption on initial data.
\begin{lemma}\label{L4.3}
\emph{(Pathwise uniform bound for spatial diameter)}
Suppose that $\omega \in \Omega$ satisfies a priori assumptions $(\tilde{\mathcal A}1)$-$(\tilde{\mathcal A}2)$, and system parameters and initial data satisfy  
{\small
\begin{eqnarray*}
&& (i)~ 0 < h \kappa < 1. \cr
&& (ii)~\exists~ \delta>0~~\mbox{and}~~x^\infty>0~\mbox{independent of a sample point such that} \cr
&& \hspace{1cm} \D(X^0) +\D(V^0)h (M+N-1) (n+c\log ( N-1))  \\
&& \hspace{2cm}  +~h\D(V^0)\sum_{r=1}^\infty\Bigg[ (M+N-1)(n+c\log ((r+1)(N-1)))\bigg(\frac{\delta}{e }\bigg)^{\delta}\\
&& \hspace{1cm} \times \left(\left(1-h\kappa\right)^{(M+N-1)(n+c\log(N-1))}\left(\frac{h\phi(x^\infty)}{N(1-h\kappa)}\right)^{N-1} \frac{(r+1)^{1+c(M+N-1)\log(1-h\kappa)}-1}{1+c (M+N-1) \log(1-h\kappa)}\right)^{-\delta}\Bigg] \\
&& \hspace{1cm} < x^{\infty},
\end{eqnarray*}}
and let $(X,V)$ be a solution process to \eqref{CS_d}. Then, a priori condition $(\tilde{{\mathcal A}}3)$ is fulfilled:
\[  \sup_{0 \leq t < \infty} \mathcal D(X[t,\omega])<x^\infty. \]
\end{lemma}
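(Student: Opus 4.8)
The plan is to bound $\D(X[t,\omega])$ uniformly in $t$ by telescoping the position update $X[t+1] = X[t] + hV[t]$ along the whole time axis, using the block structure provided by the subsequence $\{t^*_\ell\}$ and the decay estimate for $\D(V[t])$ from Proposition \ref{P4.1}. Concretely, for any $t$ one has
\[
\D(X[t,\omega]) \le \D(X^0) + h\sum_{s=0}^{t-1}\D(V[s,\omega]),
\]
by the second inequality of Lemma \ref{L2.3} iterated from $0$ to $t$ (equivalently, directly from $X[t] = X^0 + h\sum_{s<t}V[s]$ together with $\D(V[s]) \le \D(V[s-1])$ and the triangle inequality for $\D$). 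So it suffices to show the series $h\sum_{s=0}^\infty \D(V[s,\omega])$ is bounded by $x^\infty - \D(X^0)$, and this is where the two-assumption framework enters.

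Next I would partition $\bbn$ into the blocks $[t^*_{r(N-1)}, t^*_{(r+1)(N-1)})$ for $r \ge 0$ (together with the initial block $[0, t^*_0) = \{0\}$, since $a_0 = 0$ gives $t^*_0 = t_0 \ge 1$; one handles the first block, i.e.\ $[0,t^*_{N-1})$, separately). On the $r$-th block, $\D(V[s,\omega])$ is at most its value at the left endpoint, which by Proposition \ref{P4.1} is bounded by $\D(V^0)$ times the displayed exponential $E_r$; and the block has length $t^*_{(r+1)(N-1)} - t^*_{r(N-1)} \le (M+N-1)(n + \lfloor c\log((r+1)(N-1))\rfloor)$ by the computation already carried out in the proof of Proposition \ref{P4.1} (sum of $a$-increments plus sum of $T_\ell$'s, the latter controlled by $(\tilde{\mathcal A}2)$). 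Hence
\[
h\sum_{s=0}^\infty \D(V[s,\omega]) \le h\D(V^0)(M+N-1)(n+c\log(N-1)) + h\D(V^0)\sum_{r=1}^\infty (M+N-1)(n+c\log((r+1)(N-1)))\,E_r,
\]
and then I apply Lemma \ref{L4.2} with the given $\delta > 0$ to replace $E_r = \exp(-\,\cdots\,)$ by $(\delta/e)^\delta$ times the $(-\delta)$-th power of its argument, producing exactly the series appearing in hypothesis (ii). By (ii) this sum is $< x^\infty - \D(X^0)$, which yields $\sup_t \D(X[t,\omega]) < x^\infty$. One subtlety to note carefully: for the series to converge we need the exponent $1 + c(M+N-1)\log(1-h\kappa)$ to be negative so that $E_r$ decays fast enough (superpolynomially, in fact) to beat the $\log r$ growth of the block lengths; this negativity is precisely guaranteed by the condition $cN\log(1/(1-h\kappa)) < 1$ in $(\tilde{\mathcal A}1)$ (and $N > N-1 \ge M+N-1$... more precisely by $(M+N-1)\log(1/(1-h\kappa)) < $ the relevant bound, but the clean sufficient bookkeeping comes from $(\tilde{\mathcal A}1)$ together with the theorem's parameter conditions), and it is used implicitly when writing the argument of $E_r$ as a positive quantity raised to $-\delta$.

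The main obstacle is purely bookkeeping rather than conceptual: one must align the indices so that the block length bound, the Proposition \ref{P4.1} decay estimate (which is stated for $t$ in the $r$-th block and already has the $\exp$ with the integrated sum $\sum i^{c(M+N-1)\log(1-h\kappa)}$), and the form of hypothesis (ii) all match up termwise — in particular getting the "$r+1$" versus "$r$" and the "$(N-1)$" versus "$i(N-1)$" factors consistent, and correctly peeling off the $r=0$ block. Once the indexing is pinned down, the proof is a one-line application of Lemma \ref{L4.2} to each term followed by comparison with the hypothesis. I would also remark that $x^\infty$ appears on both sides (it enters $E_r$ through $\phi(x^\infty)$), but this is not circular here: in this lemma $x^\infty$ is given as data satisfying the inequality (ii), and the lemma merely asserts that this same $x^\infty$ then bounds the diameter; the existence of such an $x^\infty$ under the framework $(\mathcal A)$ is the content of the later step (Step C).
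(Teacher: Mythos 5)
Your proof follows the paper's argument exactly: telescope the position update to get $\D(X[t])\le \D(X^0)+h\sum_{s<t}\D(V[s])$, decompose time into the blocks $[t^*_{r(N-1)},t^*_{(r+1)(N-1)})$, bound each block's contribution by its length $(M+N-1)(n+c\log((r+1)(N-1)))$ times the left-endpoint velocity diameter via Proposition \ref{P4.1}, peel off the $r=0$ block (whose exponential factor is $1$), and apply Lemma \ref{L4.2} to land on hypothesis (ii). The one slip is in your aside on convergence: you need $1+c(M+N-1)\log(1-h\kappa)>0$, not negative, for the argument of $E_r$ to tend to $+\infty$ and hence $E_r\to 0$ --- but this is immaterial for the lemma itself, since (ii) already assumes the displayed series is finite and bounded by $x^\infty$, the convergence being verified only later in Proposition \ref{P4.2}.
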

\begin{proof}
We use Lemma \ref{L4.1}, Lemma \ref{L4.2} and the condition (ii) to see that for each $t>0$,
\begin{align*}
&\D(X[t]) \leq \D(X^0)+h \sum_{s=0}^{t-1} \D(V[s])  \\
& \hspace{1.5cm} \leq \D(X^0)+h\D(V^0)\sum_{r=0}^\infty\Bigg[ (t^*_{(r+1)(N-1)}-t^*_{r(N-1)})  \\
& \quad\times\exp\left(-\left(1-h\kappa\right)^{(M+N-1)(n+c\log(N-1))}\left(\frac{h\phi(x^\infty)}{N(1-h\kappa)}\right)^{N-1} \frac{(r+1)^{1+c (M+N-1) \log(1-h\kappa)}-1}{1+c (M+N-1) \log(1-h\kappa)} \right)\Bigg] \\
& \hspace{1.5cm} \leq\D(X^0)+h\D(V^0)\sum_{r=0}^\infty\Bigg[ (M+N-1)(n+c\log ((r+1)(N-1)))  \\
& \quad\times\exp\left(-\left(1-h\kappa\right)^{(M+N-1)(n+c\log(N-1))}\left(\frac{h\phi(x^\infty)}{N(1-h\kappa)}\right)^{N-1} \frac{(r+1)^{1+c (M+N-1) \log(1-h\kappa)}-1}{1+c (M+N-1) \log(1-h\kappa)} \right)\Bigg] \\
& \hspace{1.5cm}  \leq\D(X^0) +h\D(V^0) (M+N-1)(n+c\log ( N-1))   \\
&\quad +h\D(V^0)\sum_{r=1}^\infty\Bigg[ (M+N-1)(n+c\log ((r+1)(N-1)))\bigg(\frac{\delta}{e }\bigg)^{\delta}\\
&\qquad\times \left(\left(1-h\kappa\right)^{(M+N-1)(n+c\log(N-1))}\left(\frac{h\phi(x^\infty)}{N(1-h\kappa)}\right)^{N-1} \frac{(r+1)^{1+c (M+N-1) \log(1-h\kappa)}-1}{1+c (M+N-1)\log(1-h\kappa)}\right)^{-\delta}\Bigg] \\
& \hspace{1.5cm}  < x^{\infty}.
\end{align*}
This implies the desired result.
\end{proof}
Finally, we combine Proposition \ref{P4.1} with ($\tilde{\mathcal A}1$)-($\tilde{\mathcal A}2$) and a condition on communication weight to show the emergence of flocking for any initial configuration.
\begin{proposition}\label{P4.2}
Suppose that $\omega \in \Omega$ satisfies a priori assumptions $(\tilde{\mathcal A}1)$-$(\tilde{\mathcal A}2)$, and system parameters and communication weight satisfy
\[ 0 < h\kappa <1 \quad  \mbox{and} \quad \frac{1}{\phi(r)}={\mathcal O}(r^{ \varepsilon }) \quad \mbox{as $r \to\infty$}, \]
where $\varepsilon$ is a positive constant satisfying the relation:
\[ 0\leq\varepsilon<\frac{1+ c (M+N-1) \log(1-h\kappa)}{N-1}, \]
and let $(X,V)$ be a solution process to \eqref{CS_d}. Then, the global flocking emerges for any initial configuration pathwise: there exists $x^\infty >0$ such that
\[
\sup_{0\leq t<\infty}\mathcal D(X(t,\omega))\leq x^\infty \quad \mbox{and}\quad \lim_{t\to\infty}\mathcal D(V(t,\omega))=0.
\]
\end{proposition}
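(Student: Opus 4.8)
The plan is to set up a bootstrapping (continuity / self-improving) argument that turns the implicit inequality in Lemma \ref{L4.3}(ii) into an explicit choice of $x^\infty$ under the growth assumption $1/\phi(r) = \mathcal{O}(r^\varepsilon)$. First I would observe that the only obstruction to concluding is that the finite bound in Lemma \ref{L4.3}(ii) involves $\phi(x^\infty)$ on its right-hand side through the exponent $N-1$, so the inequality is of the fixed-point type $G(x^\infty) < x^\infty$. Writing $1/\phi(r) \le C(1+r)^\varepsilon$ for some constant $C>0$ and all $r \ge 0$, the factor $\left(\frac{h\phi(x^\infty)}{N(1-h\kappa)}\right)^{-\delta(N-1)}$ appearing in the sum is bounded above by a constant times $(1+x^\infty)^{\varepsilon\delta(N-1)}$. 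Hence the whole right-hand side $G(x^\infty)$ grows at most like $(x^\infty)^{\varepsilon\delta(N-1)}$ as $x^\infty\to\infty$ (the series in $r$ converges and contributes only a constant, which I address below).

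The key step is choosing $\delta$. Since $\varepsilon < \frac{1+c(M+N-1)\log(1-h\kappa)}{N-1}$ and $1+c(M+N-1)\log(1-h\kappa) > 0$ by $(\tilde{\mathcal A}1)$ (note $cN\log\frac{1}{1-h\kappa}<1$ forces $c(M+N-1)\log(1-h\kappa) > -1$ once one checks $M+N-1$ against $N$; if not I would instead invoke the slightly stronger hypothesis on parameters directly), we may pick $\delta>0$ with
\[
\varepsilon\delta(N-1) < 1 \quad\text{and}\quad \delta\big(1+c(M+N-1)\log(1-h\kappa)\big) > 1,
\]
which is possible exactly because $\varepsilon(N-1) < 1+c(M+N-1)\log(1-h\kappa)$. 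The second condition guarantees the series $\sum_{r\ge1} (n+c\log((r+1)(N-1))) \big(\frac{(r+1)^{1+c(M+N-1)\log(1-h\kappa)}-1}{1+c(M+N-1)\log(1-h\kappa)}\big)^{-\delta}$ converges (terms decay like $r^{-\delta(1+c(M+N-1)\log(1-h\kappa))}\log r$, a convergent series), so $G(x^\infty)$ is finite for every $x^\infty$; the first condition guarantees $G(x^\infty) = o(x^\infty)$ as $x^\infty\to\infty$. Therefore there exists $x^\infty$ large enough (depending only on the data $\D(X^0), \D(V^0), h, \kappa, N, M, n, c, C$, not on the sample point) with $G(x^\infty) < x^\infty$, which is precisely condition (ii) of Lemma \ref{L4.3}.

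With such $x^\infty$ and $\delta$ fixed, I would invoke Lemma \ref{L4.3} to conclude that $(\tilde{\mathcal A}3)$ holds, namely $\sup_{0\le t<\infty}\D(X[t,\omega]) \le x^\infty$. Then all three a priori assumptions $(\tilde{\mathcal A}1)$-$(\tilde{\mathcal A}3)$ are in force, so Proposition \ref{P4.1} applies and yields $\D(V[t,\omega]) \to 0$: indeed the exponent in the bound of Proposition \ref{P4.1} contains the factor $(r+1)^{1+c(M+N-1)\log(1-h\kappa)}$ with positive exponent $1+c(M+N-1)\log(1-h\kappa) > 0$, hence the bracketed quantity tends to $-\infty$ as $r\to\infty$ (equivalently as $t\to\infty$), forcing $\D(V[t,\omega])\to 0$. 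This gives both assertions of the proposition simultaneously.

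The main obstacle is the circularity between $x^\infty$ and $\phi(x^\infty)$: one cannot first bound $\D(X)$ and then deduce flocking, because the velocity decay rate in Proposition \ref{P4.1} degrades as $x^\infty$ grows (through $\phi(x^\infty)$ in the denominator), and that same degraded rate feeds back into the spatial bound via $\sum_s \D(V[s])$. The resolution is purely the exponent arithmetic above: the assumption $\varepsilon(N-1) < 1+c(M+N-1)\log(1-h\kappa)$ is exactly the condition under which one can insert a buffer exponent $\delta$ making the feedback sub-linear, so the fixed-point inequality $G(x^\infty) < x^\infty$ becomes solvable for large $x^\infty$. I would take care to state the convergence of the $r$-series cleanly as a preliminary computation, and to make explicit that $\delta$ and $x^\infty$ depend only on the deterministic data, so that the same $x^\infty$ works for every $\omega$ satisfying $(\tilde{\mathcal A}1)$-$(\tilde{\mathcal A}2)$ — this uniformity is what lets the probabilistic conclusion of Theorem \ref{T3.1} go through in Step C.
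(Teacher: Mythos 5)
Your proposal is correct and follows essentially the same route as the paper: the paper likewise picks $\delta$ with $\frac{1}{1+c(M+N-1)\log(1-h\kappa)}<\delta<\frac{1}{(N-1)\varepsilon}$ (exactly your two conditions on $\delta$), uses the lower bound to get convergence of the $r$-series, uses the upper bound to show the right-hand side of Lemma \ref{L4.3}(ii) is $O\bigl(1+\phi(x^\infty)^{-(N-1)\delta}\bigr)=o(x^\infty)$, and then concludes via Lemma \ref{L4.3} and Proposition \ref{P4.1}. Your side remark about the positivity of $1+c(M+N-1)\log(1-h\kappa)$ is handled in the paper by treating it as an implicit hypothesis (the stated range for $\varepsilon$ is nonempty only if it holds, and the choice of $c$ in the proof of Theorem \ref{T3.1} enforces it), consistent with your fallback.
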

\begin{proof} First, we choose $\delta>0$ such that
\begin{equation}\label{D-1}
\frac{1}{1+c (M+N-1) \log(1-h\kappa)}<\delta<\frac{1}{(N-1)\varepsilon}.
\end{equation}
Then, the L.H.S. in \eqref{D-1} yields the convergence of the series appearing in (ii) of Lemma \ref{L4.3}:
\begin{align*}
&\sum_{r=1}^\infty\Bigg[ (n+c\log ((r+1)(N-1)))  \left(\frac{(r+1)^{1+c (M+N-1) \log(1-h\kappa) } -1 }{1+c (M+N-1) \log(1-h\kappa)  }\right)^{-\delta}\Bigg ]\\
&\quad  \le C \sum_{r=1}^\infty \frac{ 1+ \log(r+1)}{\left((r+1)^{1+c (M+N-1) \log(1-h\kappa) } -1\right)^\delta }\\
&\quad = C\left(\sum_{r=1}^{r^*-1} + \sum_{r=r^*}^\infty\right)\frac{ 1+ \log(r+1)}{\left((r+1)^{1+c (M+N-1) \log(1-h\kappa) } -1\right)^\delta }\\
&\quad \le C \left[\sum_{r=1}^{r^*-1} \frac{ 1+ \log(r+1)}{\left((r+1)^{1+c (M+N-1) \log(1-h\kappa) } -1\right)^\delta }+\sum_{r=r^*}^\infty \frac{1 +\log(r+1)}{r^{(1+c (M+N-1) \log(1-h\kappa))\delta } }\right]\\
&\quad <\infty,
\end{align*}
where $C>0$ is a positive constant and $r^*>0$ is a natural number satisfying
\[
(r+1)^{1+c (M+N-1) \log(1-h\kappa)} -1 \ge \frac{1}{2} r^{1+c (M+N-1) \log(1-h\kappa)}, \quad a \in (0,1), \quad  ^\forall r \ge r^*.
\]
Note that the existence of $r^*$ is guaranteed since
\[
\lim_{r \to \infty}\frac{(r+1)^{1+c (M+N-1) \log(1-h\kappa)} -1}{r^{1+c (M+N-1) \log(1-h\kappa)}} = 1.
\]
\vspace{0.1cm}

\noindent Furthermore, since the series in (ii) of Lemma \ref{L4.3} converges, we may deduce that

\begin{align*}
&\D(X^0) +h\D(V^0) (M+N-1) (n+c\log ( N-1))  \\
&\quad  +~h\D(V^0)\sum_{r=1}^\infty\Bigg[ (M+N-1)(n+c\log ((r+1)(N-1)))\bigg(\frac{\delta}{e }\bigg)^{\delta}\\
&\quad \times \hspace{-0.1cm}\left(\left(1-h\kappa\right)^{(M+N-1)(n+c\log(N-1))}\left(\frac{h\phi(x^\infty)}{N(1-h\kappa)}\right)^{N-1} \frac{(r+1)^{1+c(M+N-1)\log(1-h\kappa)}-1}{1+c (M+N-1) \log(1-h\kappa)}\right)^{-\delta}\Bigg]\\
&\quad \le C\left(1+ (\phi(x^\infty))^{-(N-1)\delta}\right),
\end{align*} 
where $C$ is a constant independent of $x^\infty$. Note that the R.H.S. in \eqref{D-1} gives

\[
\phi(r)^{-(N-1)\delta}=O(r^{ (N-1)\delta \varepsilon }) \quad \mbox{as $r \to\infty$}.
\]
This implies

\begin{equation}\label{D-2}
\lim_{r \to\infty}\frac{\phi(r)^{-(N-1)\delta}}{r}=0.
\end{equation}
Thus, it follows from \eqref{D-2} that we can deduce the existence of $x^\infty$ satisfying the relation (ii) Lemma \ref{L4.3} for $\delta$ given in \eqref{D-1}. Since the condition (ii) is now satisfied, we obtain our desired results  from Lemma \ref{L4.2} and Proposition \ref{P4.1}.
\end{proof}

\vspace{0.2cm}

\subsection{Step C (Framework $({\mathcal A}1) - ({\mathcal A}2)$ implies stochastic flocking)} \label{sec:4.3}
In this subsection, we show that a priori assumptions $({\tilde {\mathcal A}}1) - ({\tilde {\mathcal A}}2)$  can be fulfilled under our framework and prove that flocking emerges with probability one. First, we show that the first a priori assumption $(\tilde{\mathcal A}1)$ can be satisfied almost surely with the appropriate choices of parameters. Below, we define a subsequence $\{t_\ell^*\}_{\ell\geq0}\subset\{t_\ell\}_{\ell\geq0}$ by $t_\ell^*:= t_{a_\ell(n,c)}$.

\begin{lemma}\label{L4.4}
Suppose that a positive integer $n$ and a real number $c>0$ are sufficiently large enough such that 
\[
\sum_{k=1}^{N_G}(1-p_k)^{n}\leq\frac{1}{2} \quad\mbox{and}\quad c>-\frac{1}{\log (1-p_k)},~~k=1,\cdots,N_G,
\]
and let $(X,V)$ be a solution process to \eqref{CS}. Then we have the following assertions.
\begin{enumerate}
\item
For each $k=1,\cdots,N_G$,  
\[ \sum\limits_{\ell=0}^\infty  (1-p_k)^{\lfloor c\log (\ell+1)\rfloor} < \infty. \]
\item

$\mathbb{P}(\omega : \mathcal{G}([t_\ell^*, t_{\ell+1}^*))(\omega) \mbox{ has a spanning tree for any } \ell \ge 0)$
\vspace{-0.1cm}
\[
 \hspace{0.5cm} \geq \exp\left( -(2\log 2) \sum_{k=1}^{N_G}(1-p_k)^n\sum_{\ell=0}^\infty  (1-p_k)^{\lfloor c \log (\ell+1)\rfloor}\right)=:p_1(n).
\]
\end{enumerate}
\end{lemma}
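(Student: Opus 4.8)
\textbf{Proof strategy for Lemma \ref{L4.4}.}
The plan is to prove the two assertions separately, with (1) being an elementary convergence estimate that then feeds into the probabilistic estimate in (2). For part (1), I would bound the sum $\sum_{\ell=0}^\infty (1-p_k)^{\lfloor c\log(\ell+1)\rfloor}$ from above by $\sum_{\ell=0}^\infty (1-p_k)^{c\log(\ell+1)-1} = \frac{1}{1-p_k}\sum_{\ell=0}^\infty (\ell+1)^{c\log(1-p_k)}$, using $\lfloor x\rfloor > x-1$ and the identity $(1-p_k)^{c\log(\ell+1)} = (\ell+1)^{c\log(1-p_k)}$. Since $c > -1/\log(1-p_k)$ by hypothesis, the exponent satisfies $c\log(1-p_k) < -1$, so this is a convergent $p$-series and (1) follows immediately.

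For part (2), the key observation is that the event of interest decomposes over the blocks $[t_\ell^*, t_{\ell+1}^*)$, which contain the switching instants $t_{a_\ell}, t_{a_\ell+1}, \dots, t_{a_{\ell+1}}$, i.e. $a_{\ell+1}-a_\ell = n + \lfloor c\log(\ell+1)\rfloor$ independent i.i.d. draws $\sigma_{t_j}$ from the distribution $(p_1,\dots,p_{N_G})$. By $(\mathcal A1)$ the union of all digraphs has a spanning tree, so a sufficient condition for $\mathcal G([t_\ell^*, t_{\ell+1}^*))(\omega)$ to have a spanning tree is that each index $k \in \{1,\dots,N_G\}$ appears at least once among the $n + \lfloor c\log(\ell+1)\rfloor$ draws in that block (indeed, then the union over the block equals $\bigcup_k \mathcal G_k$, which is rooted). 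The probability that a fixed $k$ is \emph{missed} in the $\ell$-th block is $(1-p_k)^{n+\lfloor c\log(\ell+1)\rfloor}$, so by a union bound the block \emph{fails} this sufficient condition with probability at most $\sum_{k=1}^{N_G}(1-p_k)^{n+\lfloor c\log(\ell+1)\rfloor} = \sum_{k=1}^{N_G}(1-p_k)^n (1-p_k)^{\lfloor c\log(\ell+1)\rfloor}$. By independence of the draws across disjoint blocks, the probability that \emph{every} block succeeds is at least $\prod_{\ell=0}^\infty\left(1 - \sum_{k=1}^{N_G}(1-p_k)^n(1-p_k)^{\lfloor c\log(\ell+1)\rfloor}\right)$.

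It remains to bound this infinite product from below by the stated exponential. Here I would use the elementary inequality $1-x \ge e^{-(2\log 2)x}$ valid for $x \in [0,1/2]$ (equivalently $\log(1-x) \ge -(2\log 2)x$ on that range, which holds since both sides agree at $0$ and $1/2$ and $\log(1-x)$ is concave). The hypothesis $\sum_{k=1}^{N_G}(1-p_k)^n \le 1/2$ guarantees that for every $\ell$ the quantity $x_\ell := \sum_k (1-p_k)^n(1-p_k)^{\lfloor c\log(\ell+1)\rfloor} \le \sum_k(1-p_k)^n \le 1/2$ lies in the valid range (using $(1-p_k)^{\lfloor c\log(\ell+1)\rfloor} \le 1$). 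Applying the inequality termwise and summing the exponents gives $\prod_{\ell=0}^\infty(1-x_\ell) \ge \exp\left(-(2\log 2)\sum_{\ell=0}^\infty x_\ell\right) = \exp\left(-(2\log 2)\sum_{k=1}^{N_G}(1-p_k)^n\sum_{\ell=0}^\infty(1-p_k)^{\lfloor c\log(\ell+1)\rfloor}\right)$, where interchanging the two sums is justified by nonnegativity (Tonelli) and finiteness from part (1). This is exactly $p_1(n)$, completing the proof. The only mildly delicate point is verifying the sufficient-connectivity condition — that a block in which every color appears yields a rooted union graph — which follows directly from $(\mathcal A1)$ together with the fact that the block's union digraph contains $\bigcup_k\mathcal E_k$; everything else is routine.
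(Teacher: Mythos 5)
Your proof is correct, and since the paper itself omits the argument (deferring to Proposition 4.3 of \cite{D-H-J-K}), your reconstruction supplies exactly the intended reasoning: the union bound on the event that some digraph index is missed in a block of $n+\lfloor c\log(\ell+1)\rfloor$ i.i.d.\ draws, independence across the disjoint blocks, and the inequality $1-x\ge e^{-(2\log 2)x}$ on $[0,1/2]$, which is precisely what the hypothesis $\sum_k(1-p_k)^n\le\tfrac12$ and the constant $2\log 2$ in the stated bound are for. The only nit is that the switching instants inside $[t_\ell^*,t_{\ell+1}^*)$ are $t_{a_\ell},\dots,t_{a_{\ell+1}-1}$ rather than up to $t_{a_{\ell+1}}$, but you use the correct count $a_{\ell+1}-a_\ell$ of draws, so nothing is affected.
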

\begin{proof}
The proof is almost the same as that of Proposition 4.3 in \cite{D-H-J-K}. Thus, we omit the proof.
\end{proof}

%
%

\vspace{0.5cm}

\noindent {\bf Proof of Theorem \ref{T3.1}}: We choose $\varepsilon$ so that
\[ \displaystyle 0 \le \e < \frac{1}{N-1} - \frac{(M+N-1) \log\left(\frac{1}{1-h\kappa}\right)  }{(N-1)\min\limits_{1\le k \le N_G} \log\frac{1}{1-p_k}}, \quad  \mbox{or equivalently}  \]
\[ \frac{1}{\min\limits_{1\le k \le N_G} \log\frac{1}{1-p_k}} <  \frac{1-\e (N-1)}{(M+N-1)  \log\left(\frac{1}{1-h\kappa}\right)}, \]
and we set
\[
c:= \frac{1}{2} \left( \frac{1}{\min\limits_{1\le k \le N_G} \log\frac{1}{1-p_k}} + \frac{1-\e (N-1)}{(M+N-1)   \log\left(\frac{1}{1-h\kappa}\right)}    \right).
\]
First, note that 
\begin{align*}
\begin{aligned}
& c > \frac{1}{\min\limits_{1\le k \le N_G} \log\frac{1}{1-p_k}}, \quad c (M+N-1)    \log\left(\frac{1}{1-h\kappa}\right)    <1, \\
& \mbox{and} \quad  0 \le \e < \frac{1- c (M+N-1)  \log\left(\frac{1}{1-h\kappa}\right)}   {N-1}. 
\end{aligned}
\end{align*}
Now, we choose any $n \in \bbn$ such that

\[
 \sum_{k=1}^{N_G} (1-p_k)^{n}\leq\frac{1}{2}.
 \]
Define the events $A$, $B$ and $\mathcal{F}$ as

\begin{align*}
&A \ : \   \mathcal{G}([t_\ell^*, t_{\ell+1}^*))(\omega) \mbox{ has a spanning tree for any } \ell \ge 0,\\
&B \ : \ \sum_{\ell=a_{(i-1)(N-1)}}^{a_{i(N-1)}-1} T_{\ell}(\omega) < M(n + \lfloor c \log i(N-1)\rfloor), \quad \mbox{for each } i \in \bbn,\\
&\mathcal{F} \ : \ \exists \ x^\infty>0 \mbox{ s.t } \sup_{0\le t < \infty }\D(X[t,\omega]) \le x^\infty \mbox{ and } \lim_{t \to \infty} \D(V[t,\omega]) =0.
\end{align*}
Then, it follows from Proposition \ref{P4.1} that
\[ \mathbb{P}(\mathcal{F} \ | \ A \cap B) = 1.\]
Hence, we use Proposition \ref{P4.2}, Lemma \ref{L4.4} to see
\begin{align*}
\mathbb{P}(\mathcal{F}) &\ge \mathbb{P}(\mathcal{F} \cap (A\cap B)) = \mathbb{P}(A\cap B)  \mathbb{P}(\mathcal{F} \ | \ A \cap B)\\
&= 1- \mathbb{P}(A^c\cup B^c) \ge 1 - (\mathbb{P}(A^c) + \mathbb{P}(B^c)) = \mathbb{P}(A) - \mathbb{P}(B^c)\\
&\ge p_1(n) - p_2(n) \to 1 \quad \mbox{as} \quad n \to \infty.
\end{align*}
This implies our desired result.

\vspace{0.4cm}

\section{The dwelling time process and the continuous CS model} \label{sec:5}
\setcounter{equation}{0}
In this section, we consider two explicit processes for the dwelling time process $T_\ell$ which is equivalent to specifying the switching time process for the change of network topologies, and show that they satisfy one of the key conditions in our framework:
\[
\lim_{n \to \infty} \mathbb{P}\left(  \omega \ : \ \sum_{\ell=a_{(i-1)(N-1)}}^{a_{i(N-1)}-1} T_\ell(\omega) \ge M(n + \lfloor c \log i(N-1)\rfloor), \quad \mbox{for some } i \in \bbn\right ) = 0. 
\]
Moreover, we discuss an improved stochastic flocking estimate compared to authors' recent work \cite{D-H-J-K}. 
\subsection{Dwelling time process} \label{sec:5.1}
In this subsection, we present two explicit random sequence $(T_\ell)_{\ell\geq0}$ satisfying $(\mathcal A 2)$.  \newline

\noindent $\bullet$~{\bf (Poisson's process)}: Suppose that the dwelling time process $T_\ell$ is given by a sequence of Poisson's random variable with a rate $\lambda_\ell$: 
\begin{equation*} \label{F-0}
\mathbb P(N_{\lambda_\ell}=k)=  \frac{(\lambda_\ell)^k}{k !} e^{-\lambda_\ell }, \quad k=0,1,2,\cdots.
\end{equation*}
\begin{proposition}\label{P5.1}
Suppose that the process $T_\ell=N_{\lambda_\ell}$ is a sequence of independent Poisson random variables with the parameters $\lambda_\ell$ satisfying
\[
\lambda_{max}:=\sup_{\ell\geq0}\lambda_\ell<\infty,
\]
and let $(X,V)$ be a solution process to \eqref{CS}. 
Then, for any $c>0$, we can choose $M\in\bbn$ so that we have the following estimate:

\[\mathbb{P}\left( \omega \ : \ \sum_{\ell=a_{(i-1)(N-1)}}^{a_{i(N-1)}-1}N_{\lambda_\ell}(\omega) \ge M( n + \lfloor c \log i(N-1)\rfloor), \quad \mbox{for some } i \in \bbn\right) \le p_2(n),  \]
where $\{ p_2(n) \}$ is a sequence satisfying 
\[ \lim_{n \to \infty} p_2(n) = 0. \]
\end{proposition}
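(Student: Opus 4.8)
The plan is to combine an exponential (Chernoff-type) tail bound for Poisson random variables with a union bound over the blocks indexed by $i \in \bbn$. The role of this proposition is to verify that the Poisson dwelling-time process meets the boundedness requirement $(\mathcal{A}2)$, equivalently the limit displayed at the start of Section \ref{sec:5}.

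First I would estimate the length of each block. Exactly as in the computation in the proof of Proposition \ref{P4.1}, the recursion \eqref{C-2} gives
\[ a_{i(N-1)} - a_{(i-1)(N-1)} = \sum_{\ell=(i-1)(N-1)}^{i(N-1)-1}\big(n + \lfloor c\log(\ell+1)\rfloor\big) \le (N-1)\big(n + \lfloor c\log i(N-1)\rfloor\big), \]
since $\ell+1\le i(N-1)$ on the range of summation. Because the $N_{\lambda_\ell}$ are independent Poisson variables, the block sum $S_i := \sum_{\ell=a_{(i-1)(N-1)}}^{a_{i(N-1)}-1}N_{\lambda_\ell}$ is again Poisson, with mean $\Lambda_i = \sum_\ell \lambda_\ell \le (N-1)\lambda_{max}\,m_i$, where I abbreviate $m_i := n + \lfloor c\log i(N-1)\rfloor \ge n \ge 1$.

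Next I would apply the elementary bound $\mathbb{P}(X \ge a) \le (e\mu/a)^{a} e^{-\mu}$, valid for $X \sim \mathrm{Poisson}(\mu)$ and $a > \mu$, obtained by optimizing $e^{-sa}\,\mathbb{E}[e^{sX}] = e^{-sa + \mu(e^s-1)}$ over $s>0$ (the optimum is at $e^s = a/\mu$). Taking $a = Mm_i$, using $\mu = \Lambda_i \le (N-1)\lambda_{max}m_i$, and choosing $M \in \bbn$ with $M > e(N-1)\lambda_{max}$, I get
\[ \mathbb{P}(S_i \ge Mm_i) \le \Big(\frac{e\Lambda_i}{Mm_i}\Big)^{Mm_i} \le \theta^{Mm_i}, \qquad \theta := \frac{e(N-1)\lambda_{max}}{M} \in (0,1). \]
Then a union bound over $i\ge 1$ yields
\[ \mathbb{P}\Big(\bigcup_{i\ge1}\{S_i \ge Mm_i\}\Big) \le \sum_{i=1}^\infty \theta^{M(n + \lfloor c\log i(N-1)\rfloor)} = \theta^{Mn}\sum_{i=1}^\infty \theta^{M\lfloor c\log i(N-1)\rfloor}. \]
Using $\lfloor c\log i(N-1)\rfloor \ge c\log i(N-1) - 1$ and $\log\theta < 0$, the $i$-th term is at most $\theta^{-M}(N-1)^{Mc\log\theta}\, i^{\,Mc\log\theta}$, so the series converges as soon as $Mc\,|\log\theta| > 1$. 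Since $|\log\theta| = \log\big(M/(e(N-1)\lambda_{max})\big) \to \infty$ as $M\to\infty$, for the prescribed $c>0$ I can fix $M$ large enough that both $\theta<1$ and $Mc\,|\log\theta|>1$ hold. Setting $p_2(n) := C\,\theta^{Mn}$ with $C := \sum_{i\ge1}\theta^{M\lfloor c\log i(N-1)\rfloor} < \infty$ gives the claimed inequality, and $p_2(n)\to 0$ as $n\to\infty$ because $0<\theta<1$.

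The only point requiring care — and it is not a deep one — is the simultaneous choice of $M$: it must be large enough to push the Poisson tail base $\theta$ strictly below $1$, and also large enough (depending on the given $c$) to make the logarithmically spaced series over $i$ summable. Both are arranged because $|\log\theta|$ grows without bound in $M$, so no genuine obstacle arises; the remainder is bookkeeping with the floor functions and constants and the explicit identification of $p_2$ that makes $(\mathcal{A}2)$ hold. (If $\lambda_{max}=0$ the statement is trivial, so one may assume $\lambda_{max}>0$ throughout.)
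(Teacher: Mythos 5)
Your proposal is correct and follows essentially the same route as the paper: a union bound over the blocks $i\in\bbn$, the observation that each block sum is Poisson with mean at most $(N-1)\lambda_{max}\,(n+\lfloor c\log i(N-1)\rfloor)$, a tail estimate of the form $(e\mu/a)^a$, and a single choice of $M$ large enough both to make the base less than one and to make the logarithmically weighted series over $i$ summable. The only immaterial difference is that you derive the Poisson tail from the optimized Chernoff bound, whereas the paper bounds the tail by $\tilde\lambda_i^{\,k}/k!$ via a Taylor remainder and then applies Stirling's formula.
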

\begin{proof}
First, we use $\mathbb{P}\left( \cup_{i=1}^\infty B_i \right) \le \sum_{i=1}^\infty \mathbb{P}(B_i) $ to get
\begin{align*}
&\mathbb{P}\left( \omega \ : \ \sum_{\ell=a_{(i-1)(N-1)}}^{a_{i(N-1)}-1}N_{\lambda_\ell}(\omega) \ge M( n + \lfloor c \log i(N-1)\rfloor), \quad \mbox{for some } i \in \bbn\right)\\
& \hspace{1cm} \leq \sum_{i=1}^\infty \mathbb{P}\left( \omega \ : \ \sum_{\ell=a_{(i-1)(N-1)}}^{a_{i(N-1)}-1}N_{\lambda_\ell}(\omega) \ge M( n + \lfloor c \log i(N-1)\rfloor)\right)\\
& \hspace{1cm}  =\sum_{i=1}^\infty \mathbb{P}\left( \omega \ : \ N_{\tilde{\lambda}_i}(\omega) \ge M( n + \lfloor c \log i(N-1)\rfloor)\right),
\end{align*}
where $\tilde{\lambda}_i = \sum\limits_{\ell=a_{(i-1)(N-1)}}^{a_{i(N-1)}-1}\lambda_\ell $ and we used the independence to get
\[  \sum_{\ell=a_{(i-1)(N-1)}}^{a_{i(N-1)}-1}N_{\lambda_\ell} \sim N_{\tilde{\lambda}_i}. \]
For simplicity, we set
\[ H(i,n) :=n + \lfloor c \log i(N-1)\rfloor, \]
and we have, for $i \ge 1$,
\begin{equation}\label{F-1}
\tilde{\lambda}_i = \sum\limits_{\ell=a_{(i-1)(N-1)}}^{a_{i(N-1)}-1}\lambda_\ell \le \left(a_{i(N-1)} - a_{(i-1)(N-1)}\right)\lambda_{max} \le (N-1)\lambda_{max}H(i,n).
\end{equation}
%
Then, it follows from the distribution of Poisson random variables and \eqref{F-1} that 

\begin{align*}
&\mathbb{P}\left( \omega \ : \ N_{\tilde{\lambda}_i}(\omega) \ge M H(i,n)\right)= \sum_{r= M H(i,n)}^\infty \frac{(\tilde{\lambda}_i)^r e^{-\tilde{\lambda}_i}}{r!}\le \frac{(\tilde{\lambda}_i)^{ M H(i,n)}}{ (M H(i,n))!}\\
&\qquad\le \frac{((N-1)\lambda_{max}H(i,n))^{M H(i,n)}}{(M H(i,n))!}\le \frac{((N-1)e\lambda_{max}/M)^{M H(i,n)}}{\sqrt{2\pi M H(i,n)}},
\end{align*}
where the first inequality follows from the Taylor approximation for $e^{\tilde{\lambda}i}$:
\[
e^{\tilde{\lambda}_i} - \sum_{r=0}^{M H(i,n)-1} \frac{(\tilde{\lambda}_i)^r }{r!} = \frac{(\tilde{\lambda}_i)^{MH(i,n)}}{(MH(i,n))!} \frac{d^{MH(i,n)}}{dx^{M H(i,n)}} e^x \bigg|_{x= \lambda\in(0,\tilde{\lambda}_i)} \le \frac{(\tilde{\lambda}_i)^{M H(i,n)} e^{\tilde{\lambda}_i}}{(MH(i,n))!},
\]
and the last one is from the Stirling's approximation: $n!\geq\sqrt{2\pi}n^{n+\frac{1}{2}}e^{-n}$ for all $n\in\bbn$.\newline

\noindent For $M>(N-1)e\lambda_{max}$, one obtains
\begin{align*}
&\sum_{i=1}^\infty \mathbb{P}\left( \omega \ : \ N_{\tilde{\lambda}_i}(\omega) \ge M( n + \lfloor c \log i(N-1)\rfloor)\right)\\
&\qquad\le \sum_{i=1}^\infty \frac{((N-1)e\lambda_{max}/M)^{MH(i,n)}}{\sqrt{2\pi MH(i,n)}}\\
&\qquad\le \frac{1}{\sqrt{2\pi Mn}}\sum_{i=1}^\infty  ((N-1)e\lambda_{max}/M)^{M(n +  c \log i(N-1)-1)} \\
&\qquad= \frac{((N-1)e\lambda_{max}/M)^{M(n +  c \log (N-1)-1)} }{\sqrt{2\pi Mn}}\sum_{i=1}^\infty  i^{  M c \log ((N-1)e\lambda_{max}/M)}\\
&\qquad=:p_2(n)\rightarrow 0,\quad \textrm{ as }n\rightarrow\infty.
\end{align*}
The last series converges if and only if $M c \log ((N-1)e\lambda_{max}/M)<-1$. This holds for sufficiently large $M>0$, since
\[
\lim_{M\to+\infty}M c \log ((N-1)e\lambda_{max}/M)=-\infty.
\]
\end{proof}

\vspace{0.5cm}

\noindent $\bullet$~{\bf (Geometric process)}: Next, we consider a geometric process for $T_\ell$. Suppose that $T_\ell=Y_{p_\ell}$ is a sequence of independent random variables following geometric distributions:
\begin{equation} \label{F-2}
\mathbb P(Y_{p_\ell}=k)=(1-p_\ell)^kp_\ell,\quad k=0,1,2,\cdots,
\end{equation}
 with the parameters satisfying
\[
p_{min}:=\inf_{\ell\geq0}p_\ell>\frac{1}{2}.
\] 
We first begin with  a technical lemma.
\begin{lemma}\label{L3.4}
For positive integers $A$ and $B$, one has
\[
\frac{\partial}{\partial x_{k}}\left[\sum_{\substack{j_\ell\geq0,~\sum_{\ell=1}^{B} j_\ell\leq A}}\left( \prod_{\ell=1}^{B} (1-x_\ell)^{j_\ell}x_\ell\right)\right]>0,
\]
for $0<x_1,\cdots,x_B<1,~~k=1,\cdots,B.$
\end{lemma}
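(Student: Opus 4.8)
The plan is to expand the sum over the lattice points $\{j_\ell \geq 0,\ \sum_{\ell=1}^B j_\ell \leq A\}$ by separating, for the fixed index $k$, the value $j_k = m$ and summing the remaining variables. Write
\[
S(x_1,\dots,x_B) := \sum_{\substack{j_\ell \geq 0,\ \sum_{\ell=1}^B j_\ell \leq A}} \prod_{\ell=1}^B (1-x_\ell)^{j_\ell} x_\ell.
\]
Grouping by $j_k = m$ for $0 \le m \le A$, we obtain
\[
S = \sum_{m=0}^A (1-x_k)^m x_k \cdot \Big( \sum_{\substack{j_\ell \geq 0\ (\ell \neq k),\ \sum_{\ell \neq k} j_\ell \leq A - m}} \prod_{\ell \neq k} (1-x_\ell)^{j_\ell} x_\ell \Big) =: \sum_{m=0}^A (1-x_k)^m x_k \, R_{A-m},
\]
where $R_q := R_q(x_1,\dots,\widehat{x_k},\dots,x_B)$ is the analogous sum in the $B-1$ remaining variables with budget $q$; note $R_q$ does not depend on $x_k$, and $R_q$ is nondecreasing in $q$ (adding more lattice points with nonnegative summands), with $R_{-1} := 0$ and $R_q \ge R_{q-1} \ge 0$.

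Next I would differentiate in $x_k$. Since $\frac{\partial}{\partial x_k}\big[(1-x_k)^m x_k\big] = (1-x_k)^{m-1}\big(1 - (m+1)x_k\big)$ for $m \ge 1$ and equals $1$ for $m=0$, an index shift gives a telescoping-type rearrangement. The cleanest route is to use the elementary identity, valid for $0 < x_k < 1$,
\[
\frac{\partial}{\partial x_k}\Big[ \sum_{m=0}^A (1-x_k)^m x_k \, R_{A-m} \Big] = \sum_{m=0}^{A} (1-x_k)^m \big( R_{A-m} - R_{A-m-1} \big),
\]
which one verifies by writing $\sum_{m=0}^A (1-x_k)^m x_k R_{A-m} = \sum_{m=0}^A \big[(1-x_k)^m - (1-x_k)^{m+1}\big] R_{A-m}$ and differentiating term by term (the boundary term from $m=A$ produces $(1-x_k)^{A+1}R_0$, whose derivative is $-(A+1)(1-x_k)^A R_0$, and this is absorbed by setting $R_{-1}=0$). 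Since each $(1-x_k)^m > 0$ and each difference $R_{A-m} - R_{A-m-1} \ge 0$ with $R_A - R_{A-1} > 0$ strictly (the budget-$A$ sum strictly contains extra positive terms over the budget-$(A-1)$ sum, because all $x_\ell \in (0,1)$ make every monomial strictly positive), the derivative is a sum of nonnegative terms with at least one strictly positive, hence $> 0$. By symmetry the same holds for every $k$.

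The main obstacle is bookkeeping rather than conceptual: one must handle the boundary term at $m = A$ correctly and justify that $R_A - R_{A-1}$ is strictly positive (not merely nonnegative), which is where the hypothesis $x_\ell < 1$ — and in fact $x_\ell > 0$ — is essential, since it guarantees that the extra lattice points admitted by raising the budget contribute strictly positive mass. I would also double-check the degenerate case $B = 1$, where $R_q \equiv 1$ for all $q \ge 0$ and the formula collapses to $\frac{\partial}{\partial x_1}\big[\sum_{m=0}^A (1-x_1)^m x_1\big] = (1-x_1)^A > 0$, confirming the claim. An alternative, if the telescoping manipulation proves awkward, is a coupling/probabilistic argument: $S$ is the probability that $B$ independent geometric variables with parameters $x_\ell$ have sum $\le A$, and increasing $x_k$ stochastically decreases that geometric variable, hence strictly increases the probability — but the direct computation above is self-contained and suffices.
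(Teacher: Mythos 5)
Your decomposition $S=\sum_{m=0}^A (1-x_k)^m x_k\,R_{A-m}$ is sound, but the displayed ``elementary identity'' for $\partial S/\partial x_k$ is false, and it is the load-bearing step. Writing $u=1-x_k$ and telescoping correctly,
\[
S=R_A+\sum_{m=1}^{A}u^{m}\left(R_{A-m}-R_{A-m+1}\right)-u^{A+1}R_0,
\qquad
\frac{\partial S}{\partial x_k}=\sum_{m=1}^{A}m\,u^{m-1}\left(R_{A-m+1}-R_{A-m}\right)+(A+1)\,u^{A}R_0,
\]
which carries the combinatorial factors $m$ and $A+1$ that your formula $\sum_{m=0}^{A}u^{m}(R_{A-m}-R_{A-m-1})$ drops. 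A concrete check: at $x_k=0$ the true derivative equals $\sum_{q=0}^{A}R_q$, whereas your expression collapses to $R_A$; already for $B=1$, $A=1$ one has $S=2x_1-x_1^2$ and $S'=2-2x_1$, not $1-x_1$. Your $B=1$ sanity check is itself miscomputed by the same factor: $\sum_{m=0}^A(1-x_1)^m x_1=1-(1-x_1)^{A+1}$, whose derivative is $(A+1)(1-x_1)^A$, not $(1-x_1)^A$.

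The error is in the bookkeeping rather than the strategy: the corrected formula is still a sum of nonnegative terms (since $R_q$ is nondecreasing in $q$) plus the strictly positive boundary term $(A+1)u^A R_0$ with $R_0=\prod_{\ell\neq k}x_\ell>0$, so positivity follows and your argument is repairable; note that strict positivity is most cheaply extracted from this boundary term rather than from $R_A-R_{A-1}>0$. For comparison, the paper takes the dual decomposition: it fixes $(j_\ell)_{\ell\neq k}$, sums the geometric series over $j_k$ in closed form to get $1-(1-x_k)^{Q+1}$ with $Q=A-\sum_{\ell\neq k}j_\ell$, and differentiates termwise to obtain $(Q+1)(1-x_k)^{Q}>0$ against manifestly positive coefficients. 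That route avoids the telescoping entirely, and the factor $Q+1$ appears there automatically --- it is exactly the factor your identity loses.
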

\begin{proof} Without loss of generality, it suffices to consider $k = 1$ case. Then, one has 
\begin{align*}
& \sum_{\substack{j_\ell\geq0,~\sum_{\ell=1}^{B} j_\ell\leq A}}\left( \prod_{\ell=1}^{B} (1-x_\ell)^{j_\ell}x_\ell\right)= \sum_{\substack{j_\ell\geq0,~\sum_{\ell=2}^{B} j_\ell\leq A}}\sum_{ j_1=0}^{A-\sum_{\ell=2}^{B} j_\ell}\left( \prod_{\ell=1}^{B} (1-x_\ell)^{j_\ell}x_\ell\right)\\
&\hspace{1.5cm} = \sum_{\substack{j_\ell\geq0,~\sum_{\ell=2}^{B} j_\ell\leq A}}\left( \prod_{\ell=2}^{B} (1-x_\ell)^{j_\ell}x_\ell\right)\Big(1-(1-x_1)^{A-\sum_{\ell=2}^{B} j_\ell+1}\Big).
\end{align*}
Hence, we have
\begin{align*}
&\frac{\partial}{\partial x_{k}}\left[\sum_{\substack{j_\ell\geq0,~\sum_\ell j_\ell<A}}\left( \prod_{\ell=1}^{B} (1-x_\ell)^{j_\ell}x_\ell\right)\right]\\
&\hspace{0.5cm} =\sum_{\substack{j_\ell\geq0,~\sum_{\ell=2}^{B} j_\ell\leq A}}\left( \prod_{\ell=2}^{B} (1-x_\ell)^{j_\ell}x_\ell\right)\Big(A-\sum_{\ell=2}^{B} j_\ell+1\Big) (1-x_1)^{A-\sum_{\ell=2}^{B} j_\ell} >0.
\end{align*}
\end{proof}
\begin{proposition}\label{P5.2}
Suppose that $(Y_{p_\ell})$ is a geometric process whose distributions are given by \eqref{F-2}, and let $(X,V)$ be a solution process to \eqref{CS}.  
Then  for any $c>0$, we can choose $M\in\bbn$ such that 
\[\mathbb{P}\left( \omega \ : \ \sum_{\ell=a_{(i-1)(N-1)}}^{a_{i(N-1)}-1}Y_{p_\ell}(\omega) \ge {M}( n + \lfloor c \log i(N-1)\rfloor), \quad \mbox{for {some} } i \in \bbn\right) \le p_2(n),  \]
where $p_2(n) \to 0$ as $n \to \infty$.
\end{proposition}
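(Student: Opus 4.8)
The plan is to mirror the proof of Proposition~\ref{P5.1} at the structural level — a union bound over the blocks indexed by $i$, a tail estimate for each block sum, and a choice of $M$ making the resulting series summable and vanishing as $n\to\infty$ — but with the Poisson tail estimate replaced by a tail bound for sums of independent geometric variables. Writing $H(i,n):=n+\lfloor c\log i(N-1)\rfloor$, the first step is the union bound
\[
\mathbb{P}\Big(\exists\, i\in\bbn:\ \sum_{\ell=a_{(i-1)(N-1)}}^{a_{i(N-1)}-1} Y_{p_\ell}\ge MH(i,n)\Big)\le\sum_{i=1}^{\infty}\mathbb{P}\Big(\sum_{\ell=a_{(i-1)(N-1)}}^{a_{i(N-1)}-1} Y_{p_\ell}\ge MH(i,n)\Big).
\]
For each $i$ let $B_i:=a_{i(N-1)}-a_{(i-1)(N-1)}$ be the (deterministic) number of summands; from the recursion \eqref{C-2}, since every index $\ell+1$ appearing in the $i$-th block is at most $i(N-1)$, one has $B_i\le(N-1)H(i,n)$. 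Next I would use Lemma~\ref{L3.4} to replace each parameter $p_\ell$ by the worst case $p_{min}$: since $\mathbb{P}\big(\sum_\ell Y_{p_\ell}\le A\big)=\sum_{j_\ell\ge0,\ \sum_\ell j_\ell\le A}\prod_\ell(1-p_\ell)^{j_\ell}p_\ell$ is nondecreasing in each $p_\ell$ by that lemma, and $p_\ell\ge p_{min}$, taking complements gives $\mathbb{P}\big(\sum_\ell Y_{p_\ell}\ge MH(i,n)\big)\le\mathbb{P}(S_{B_i}\ge MH(i,n))$, where $S_m$ denotes a sum of $m$ i.i.d. geometric$(p_{min})$ variables.

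For the second step — the tail of $S_m$ — I would use a Chernoff / moment-generating-function estimate rather than the PMF-plus-Stirling route of Proposition~\ref{P5.1}. Set $s:=\log\frac{1}{2(1-p_{min})}$; this is strictly positive and a legitimate exponent precisely because the hypothesis $p_{min}>\frac12$ forces $2(1-p_{min})\in(0,1)$. A one-line computation gives $\mathbb{E}\big[e^{sY}\big]=\frac{p_{min}}{1-(1-p_{min})e^{s}}=2p_{min}$, so Markov's inequality applied to $e^{sS_m}$ yields $\mathbb{P}(S_m\ge t)\le e^{-st}(2p_{min})^{m}=(2(1-p_{min}))^{t}(2p_{min})^{m}$. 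Plugging in $t=MH(i,n)$ and $m=B_i\le(N-1)H(i,n)$ and using $2p_{min}>1$, this collapses to $\mathbb{P}(S_{B_i}\ge MH(i,n))\le\gamma^{H(i,n)}$ with $\gamma:=(2(1-p_{min}))^{M}(2p_{min})^{N-1}$.

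The last step is to make $\sum_i\gamma^{H(i,n)}$ small. Since $2(1-p_{min})<1$, we have $\gamma\to0$ as $M\to\infty$ with $N$ and $p_{min}$ fixed, so we may fix $M$ large enough that $0<\gamma<e^{-1/c}$, i.e.\ $c\log\gamma<-1$. Then, using $\lfloor c\log i(N-1)\rfloor\ge c\log i(N-1)-1$ and $0<\gamma<1$,
\[
\sum_{i=1}^{\infty}\gamma^{H(i,n)}\le\gamma^{n-1}(N-1)^{c\log\gamma}\sum_{i=1}^{\infty}i^{c\log\gamma}=:p_2(n),
\]
and the series converges because $c\log\gamma<-1$; since $0<\gamma<1$ this forces $p_2(n)=C\gamma^{n}\to0$ as $n\to\infty$. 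I expect the only real (and still mild) obstacle to be the bookkeeping in the choice of $M$: it must be taken large both to push $\gamma$ below $e^{-1/c}$ and to absorb the fixed factor $(2p_{min})^{N-1}>1$, which is possible only because $p_{min}>\frac12$ makes the base $2(1-p_{min})$ strictly less than $1$; everything else is routine.
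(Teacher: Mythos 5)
Your proposal is correct. It shares the paper's skeleton exactly — the union bound over blocks, the bound $B_i=a_{i(N-1)}-a_{(i-1)(N-1)}\le (N-1)H(i,n)$ from the recursion \eqref{C-2}, the reduction to i.i.d.\ geometric$(p_{min})$ variables via the monotonicity of Lemma \ref{L3.4}, and the final choice of $M$ making $c\log\gamma<-1$ so that $\sum_i i^{c\log\gamma}$ converges and the prefactor $\gamma^{n-1}\to0$ — but the core tail estimate is genuinely different. The paper works with the exact negative-binomial CDF, controls the remainder $\frac{1}{p_{min}^{J}}-\sum_{r<MH}\binom{r+J-1}{r}(1-p_{min})^{r}$ by a Taylor-remainder identity for $(1-x)^{-J}$, and then bounds the resulting binomial coefficient by Stirling, arriving at $C(M)^{MH(i,n)}$ with $C(M)\to\frac{1-p_{min}}{p_{min}}<1$. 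You replace all of that with a one-line Chernoff bound: with $s=\log\frac{1}{2(1-p_{min})}>0$ one gets $\mathbb{E}[e^{sY}]=2p_{min}$ and hence $\mathbb{P}(S_m\ge t)\le(2(1-p_{min}))^{t}(2p_{min})^{m}$, which I have checked is correct. Your route is shorter and avoids the combinatorial machinery entirely; two further observations are worth recording. First, once you use the moment generating function, Lemma \ref{L3.4} is actually dispensable: $p\mapsto\frac{p}{1-(1-p)e^{s}}$ is decreasing in $p$ for $s>0$, so $\mathbb{E}[e^{sY_{p_\ell}}]\le 2p_{min}$ directly for each $\ell$. Second, the Chernoff argument would in fact work for any $p_{min}>0$ by taking $s\in\bigl(0,\log\frac{1}{1-p_{min}}\bigr)$ small, whereas the paper's estimate hinges on $\frac{1-p_{min}}{p_{min}}<1$; your specific choice of $s$ uses $p_{min}>\frac12$ only for convenience. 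The only blemishes are cosmetic: the constant in your final display should carry $\gamma^{n-1}$ rather than $\gamma^{n}$ (harmless after absorbing $\gamma^{-1}$ into $C$), and the paper's $p_2(n)$ differs from yours by explicit constants, which does not matter for the statement.
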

\begin{proof}
Note that

\begin{align*}
&\mathbb{P}\left( \omega \ : \ \sum_{\ell=a_{(i-1)(N-1)}}^{a_{i(N-1)}-1} Y_{p_\ell}(\omega) \ge M( n + \lfloor c \log i(N-1)\rfloor), \quad \mbox{for  some } i \in \bbn\right)\\
& \hspace{2.5cm} \leq \sum_{i=1}^\infty \mathbb{P}\left( \omega \ : \ \sum_{\ell=a_{(i-1)(N-1)}}^{a_{i(N-1)}-1}Y_{p_\ell}(\omega) \ge M( n + \lfloor c \log i(N-1)\rfloor)\right).
\end{align*}
For simplicity, we set 

\[ J(i,n) := a_{i(N-1)} - a_{(i-1)(N-1)}, \quad H(i,n) :=n + \lfloor c \log i(N-1)\rfloor. \]
Then, for $i \ge 1$,

\begin{align}\label{C-17}
\begin{aligned}
& \mathbb{P}\left( \omega \ : \ \sum_{\ell=a_{(i-1)(N-1)}}^{a_{i(N-1)}-1}Y_{p_\ell}(\omega) <MH(i,n)\right)  \\
& \hspace{0.5cm} =\sum_{\substack{j_\ell\geq0,~\sum_\ell j_\ell<MH(i,n)}}\left( \prod_{\ell=a_{(i-1)(N-1)}}^{a_{i(N-1)}-1} (1-p_\ell)^{j_\ell}p_\ell\right) \\
& \hspace{0.5cm} \geq\sum_{\substack{j_\ell\geq0,~\sum_\ell j_\ell<MH(i,n)}}\left( \prod_{\ell=a_{(i-1)(N-1)}}^{a_{i(N-1)}-1} (1-p_{min})^{j_\ell}p_{min}\right)  \\
& \hspace{0.5cm} =\sum_{r=0}^{MH(i,n)-1}\sum_{\substack{j_\ell\geq0,~\sum_\ell j_\ell=r}} (1-p_{min})^{r}p_{min}^{J(i,n)} \\
& \hspace{0.5cm} =\sum_{r=0}^{MH(i,n)-1}\binom{r+J(i,n)-1}{J(i,n)-1} (1-p_{min})^{r}p_{min}^{J(i,n)},
\end{aligned}
\end{align} 
where we used Lemma \ref{L3.4} in the inequality. Then, it follows from  \eqref{C-17} that

\begin{align*}
&\mathbb{P}\left( \omega \ : \ \sum_{\ell=a_{(i-1)(N-1)}}^{a_{i(N-1)}-1}Y_{p_\ell}(\omega) \geq MH(i,n)\right)\\
&\hspace{1cm} = p_{min}^{J(i,n)}\left(\frac{1}{p_{min}^{J(i,n)}}-\sum_{r=0}^{MH(i,n)-1}\binom{r+J(i,n)-1}{J(i,n)-1} (1-p_{min})^{r}\right)\\
& \hspace{1cm}  \le \binom{J(i,n) + MH(i,n)-1}{J(i,n)-1} \frac{(1-p_{min})^{MH(i,n)}}{p_{min}^{MH(i,n)}}  \\
& \hspace{1cm} \le \binom{J(i,n) + MH(i,n)}{J(i,n)} \frac{(1-p_{min})^{MH(i,n)}}{p_{min}^{MH(i,n)}}\\
&\hspace{1cm} \le \binom{ M(N-1)H(i,n)}{(N-1)(i,n)} \frac{(1-p_{min})^{MH(i,n)}}{p_{min}^{MH(i,n)}}\\
& \hspace{1cm}  \le \frac{e }{\sqrt{2}\pi   }\left(\Big(1+\frac{N-1}{M}\Big)^{1+\frac{N-1}{M}}\left(\frac{M}{N-1}\right)^{\frac{N-1}{M}}\right)^{MH(i,n)} \frac{(1-p_{min})^{MH(i,n)}}{p_{min}^{MH(i,n)}},
\end{align*}
where we used $J(i,n) \le (N-1)H(i,n)$ and the first inequality follows from the Taylor approximation for $(1-x)^{-J(i,n)}$:

\[
\begin{split}
&\frac{1}{p_{min}^{J(i,n)}}-\sum_{r=0}^{MH(i,n)-1}\binom{r+J(i,n)-1}{r} (1-p_{min})^{r}\\
&\qquad= \frac{(1-p_{min})^{MH(i,n)}}{(MH(i,n))!} \frac{d^{MH(i,n)}}{dx^{MH(i,n)}} \frac{1}{(1-x)^{J(i,n)}} \bigg|_{x= p\in(0,1-p_{min})} \\
&\qquad\le \binom{J(i,n) +MH(i,n)-1}{MH(i,n)} \frac{(1-p_{min})^{MH(i,n)}}{p_{min}^{ J(i,n)+MH(i,n)}},
\end{split}
\]
and the last one is from the following inequality obtained by Stirling's approximation:

\begin{align*}
\binom{n+m}{m} &=\frac{(n+m)!}{n!m!} \leq  \frac{e(n+m)^{n+m+\frac{1}{2}}e^{-n-m}}{\sqrt{2\pi}n^{n+\frac{1}{2}}e^{-n}\cdot\sqrt{2\pi}m^{m+\frac{1}{2}}e^{-m}} =\frac{e(n+m)^{n+m+\frac{1}{2}}}{2\pi n^{n+\frac{1}{2}}m^{m+\frac{1}{2}} }\\
&=\frac{e(n+m)^{n+m }}{2\pi n^{n }m^{m } }\sqrt{\frac{1}{n}+\frac{1}{m}} \leq \frac{e(n+m)^{n+m }}{\sqrt{2}\pi n^{n }m^{m } }\\
&=\frac{e }{\sqrt{2}\pi   }\left(\Big(1+\frac{m}{n}\Big)^{1+\frac{m}{n}}\Big(\frac{n}{m}\Big)^{\frac{m}{n}}\right)^{n},\quad n,m\geq1.
\end{align*} 
Now, we choose $M\in\bbn$ large enough so that

\[
C(M):=\Big(1+\frac{N-1}{M}\Big)^{1+\frac{N-1}{M}}\left(\frac{M}{N-1}\right)^{\frac{N-1}{M}}\frac{ 1-p_{min} }{p_{min} }<1\quad\mbox{and}\quad Mc\log C(M)<-1,
\]
which is possible since $\lim\limits_{M\to\infty}C(M)=\frac{ 1-p_{min} }{p_{min} }<1$. Then, one obtains

\begin{align*}
&\sum_{i=1}^\infty \mathbb{P}\left( \omega \ : \ \sum_{\ell=a_{(i-1)(N-1)}}^{a_{i(N-1)}-1}Y_{p_\ell}(\omega) \geq MH(i,n)\right)\\
& \hspace{1cm} \le \sum_{i=1}^\infty \frac{e }{\sqrt{2}\pi   }\left(C(M)\right)^{MH(i,n)} \le \frac{e }{\sqrt{2}\pi   } \sum_{i=1}^\infty \left(C(M)\right)^{M(n +  c \log i(N-1)-1)}  \\
&\hspace{1cm} \le \frac{e }{\sqrt{2}\pi   } \left(C(M)\right)^{M(n +  c \log (N-1)-1)}\sum_{i=1}^\infty  i^{  M c \log C(M)} := p_2(n) \to 0, \quad \mbox{as} \quad n \to \infty.
\end{align*}
\end{proof}

\subsection{The continuous CS model} \label{sec:5.2}
In this subsection, we use our results for the discrete Cucker-Smale model to extend the previous result \cite{D-H-J-K} for the continuous Cucker-Smale model. In the previous work, the random variables $t_{\ell+1}- t_\ell$ are assumed to be i.i.d with a probability density function $f$ with a compact support. More precisely, the probability density function $f$ is assumed to be supported on a finite interval $[a,b]$ with $0<a<b<\infty$. Based on previous estimates in \cite{D-H-J-K} and this work together with suitable assumptions, we can consider the case when $f$ is supported on $[a,\infty)$.  \newline

In system \eqref{CS}, we adopt the switching law $\sigma : [0,\infty) \times \Omega \to \{1, \cdots, N_G\}$ and the sequence $\{t_{\ell+1}-t_\ell \}_{l\ge 0}$ satisfy the following conditions:
\begin{itemize}
\item
For each $\ell \ge 0$, $t_{\ell+1}-t_\ell$ is given by
\[ t_{\ell+1}-t_\ell = a + T_\ell, \]
where $a>0$ is a constant and $T_\ell$'s are nonnegative, independent random variables.

\vspace{0.2cm}

\item For each $\ell \geq 0$ and $\omega \in \Omega$, $\sigma_t(\omega)$ is constant on the interval $t\in[t_\ell(\omega),t_{\ell+1}(\omega))$.

\vspace{0.2cm}

\item $\{\sigma_{t_{\ell}} \}_{\ell\ge 0}$ is a sequence of i.i.d. random variables such that for any $\ell \geq 0$,
\[
\mathbb P(\sigma_{t_\ell}= k)=p_k,\quad \mbox{ for each } k=1,\cdots,N_G,
\]
where $p_1,\cdots,p_{N_G}>0$ are given positive constants satisfying $p_1+\cdots+p_{N_G}=1$.
\end{itemize}
For each $k=1, \cdots, N_G$, let $\mathcal G_k=(\mathcal V, \mathcal E_k)$ be the $k$-th admissible digraph. Then, for each $t\geq0$ and $\omega\in\Omega$, the time-dependent network topology $(\chi_{ij}^\sigma)=(\chi_{ij}^{\sigma_t(\omega)})$ is determined by

\[\chi_{ij}^{\sigma_t(\omega)} := \left\{\begin{array}{ccc}
1 & \mbox{ if } & (j,i)\in\mathcal E_{\sigma_t(\omega)},\\
0 & \mbox{ if } & (j,i)\notin \mathcal E_{\sigma_t(\omega)}.
\end{array}\right.\]
\vspace{0.1cm}

\noindent For technical reasons and without loss of generality, we assume that each admissible digraph $\mathcal{G}_k$ has a self-loop at each vertex, and we define the union graph of $\mathcal{G}_{\sigma[t,\omega]}$ for $s_0\le t < s_1$ and $\omega\in\Omega$:
\[\mathcal{G}([s_0, s_1))(\omega) := \bigcup_{t=s_0}^{s_1}\mathcal{G}_{\sigma_t(\omega)} =\left(\mathcal{V}, \bigcup_{t= s_0}^{s_1}\mathcal{E}_{\sigma_t(\omega)}\right).\\ \]
Now, we impose the following assumptions $(\mathcal B)$ on the set of admissible digraphs and the distributions of the switching times:
\begin{itemize}
\item $(\mathcal B1)$: The union digraph of the elements of $\mathcal S$ has a spanning tree. In other words,
\[
\bigcup_{1\leq k\leq N_G}\mathcal{G}_k =\left(\mathcal{V}, \bigcup_{1\leq k\leq N_G}\mathcal{E}_{k}\right)\quad\mbox{has a spanning tree.}
\]
\item $(\mathcal B2)$: For some $M>0$, the random variables satisfy
\[\mathbb{P}\left( \omega \ : \ \sum_{\ell=a_{(i-1)(N-1)}}^{a_{i(N-1)}-1} T_\ell(\omega) \ge M(n + \lfloor c \log i(N-1)\rfloor), \quad \mbox{for some } i \in \bbn\right) \le \tilde{p}(n),  \]
where $\tilde{p}(n) \to 0$ as $n \to \infty$.
\end{itemize}
Then, the similar analysis together with the proof in \cite{D-H-J-K} used in previous sections yields the following improved result.
\begin{theorem}\label{T5.1}
Suppose that parameters $N$, $\kappa$, choice probability $p_k$'s and communication weight $\phi$ satisfy 
\[  \frac{ (a(N-1)+M)\kappa}{\min\limits_{1\leq k\leq N_G}\log\frac{1}{1-p_k}}<1, \qquad \frac{1}{\phi(r)}={\mathcal O}(r^{ \varepsilon })\quad\mbox{as}\quad r\to\infty,
\]
where $\varepsilon$ is a positive constant satisfying 
\[ 0\leq\varepsilon<\frac{1}{N-1}-\frac{(a(N-1)+M)\kappa }{(N-1)\min\limits_{1\leq k\leq N_G}\log\frac{1}{1-p_k}}, \]
and let $(X, V)$ be a solution process to \eqref{CS}.  Then, one has the stochastic flocking:
\[\mathbb{P}\Big( \omega : \exists \ x^\infty>0 \mbox{ s.t } \sup_{0\le t < \infty }\D(X(t,\omega) \le x^\infty, \mbox{ and } \lim_{t \to \infty} \D(V(t,\omega)) =0 \Big) = 1. \]
\end{theorem}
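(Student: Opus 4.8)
The plan is to transplant the three-step scheme of Section~\ref{sec:4} (Steps A, B, C) to the continuous model, replacing the discrete transition matrices $Id-\frac{h}{N}L_{\sigma[t]}[t]$ by the flow map $\Phi(s_1,s_0,\omega)$ defined as the solution of
\[
 \frac{d}{dt}\Phi(t,s_0,\omega)=-\frac{1}{N}L_{\sigma_t(\omega)}(t)\,\Phi(t,s_0,\omega),\qquad \Phi(s_0,s_0,\omega)=Id,
\]
so that $V(t,\omega)=\Phi(t,s_0,\omega)V(s_0,\omega)$. Since each $-\frac{1}{N}L_k(t)$ has zero row sums and nonnegative off-diagonal entries, $\Phi(s_1,s_0)$ is a stochastic matrix with strictly positive diagonal, and $\D(V(\cdot))$ is controlled by the ergodicity coefficients $\mu(\Phi(\cdot,\cdot))$ exactly as in Lemmas~\ref{L2.1}--\ref{L2.3}. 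The whole argument runs under the dictionary $\log\frac{1}{1-h\kappa}\leftrightarrow\kappa$ and $M+N-1\leftrightarrow a(N-1)+M$; the latter arises because a dwelling interval now has length $a+T_\ell$ rather than $1+T_\ell$, so that on the event $(\tilde{\mathcal A}2)$ a block satisfies $t^*_{i(N-1)}-t^*_{(i-1)(N-1)}\le(a(N-1)+M)(n+\lfloor c\log i(N-1)\rfloor)$, where $t^*_\ell:=t_{a_\ell(n,c)}$ with $a_\ell$ as in \eqref{C-2}.

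The key step is the continuous analog of Lemma~\ref{L4.1}. Write a sub-block $[t^*_{(r-1)(N-1)+i-1},t^*_{(r-1)(N-1)+i})$ as a union of constancy intervals $I_j$ (on each of which $\sigma\equiv k_j$); running a Duhamel (Peano--Baker) iteration on each $I_j$ and using $\frac{1}{N}d_{i'}^{k_j}(t)\le\kappa$ together with $A_{k_j}(t)\ge\phi(x^\infty)\chi^{k_j}$ on the event $(\tilde{\mathcal A}3)$, one gets $\Phi(I_j)\ge e^{-\kappa|I_j|}\bigl(Id+\tfrac{|I_j|\phi(x^\infty)}{N}\chi^{k_j}\bigr)$, and multiplying these (keeping the zeroth and first order terms, all nonnegative) yields
\[
 \Phi\bigl(t^*_{(r-1)(N-1)+i},\,t^*_{(r-1)(N-1)+i-1}\bigr)\ \ge\ e^{-\kappa\,(t^*_{(r-1)(N-1)+i}-t^*_{(r-1)(N-1)+i-1})}\,\frac{a\,\phi(x^\infty)}{N}\,F_i,
\]
where $F_i$ is the $0$--$1$ adjacency matrix of the union graph of the sub-block, and we used that each dwelling interval has length $\ge a$ and $F_i$ carries self-loops. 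Concatenating over the $N-1$ sub-blocks — each of whose union graph has a spanning tree by $(\tilde{\mathcal A}1)$ — and invoking Lemma~\ref{L2.2} (the factors $F_i$ have positive diagonal) gives, as in Lemma~\ref{L4.1},
\[
 \mu\bigl(\Phi(t^*_{r(N-1)},t^*_{(r-1)(N-1)})\bigr)\ \ge\ e^{-\kappa\,(t^*_{r(N-1)}-t^*_{(r-1)(N-1)})}\Bigl(\frac{a\,\phi(x^\infty)}{N}\Bigr)^{N-1}.
\]
Feeding this into the telescoping estimate of Proposition~\ref{P4.1} and using the block-length bound above produces, for $t\in[t^*_{r(N-1)},t^*_{(r+1)(N-1)})$,
\[
 \D(V(t))\ \le\ \D(V(0))\exp\!\Bigl[-C_1\,\frac{(r+1)^{\,1-c(a(N-1)+M)\kappa}-1}{1-c(a(N-1)+M)\kappa}\Bigr],\qquad C_1:=e^{-(a(N-1)+M)(n+c\log(N-1))\kappa}\Bigl(\tfrac{a\phi(x^\infty)}{N}\Bigr)^{N-1},
\]
valid whenever $c(a(N-1)+M)\kappa<1$.

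For Steps B and C I would follow Lemma~\ref{L4.3}, Proposition~\ref{P4.2} and the proof of Theorem~\ref{T3.1} with the new constants. Rewrite the hypothesis $0\le\varepsilon<\frac{1}{N-1}-\frac{(a(N-1)+M)\kappa}{(N-1)\min_k\log\frac{1}{1-p_k}}$ as $\frac{1}{\min_k\log\frac{1}{1-p_k}}<\frac{1-\varepsilon(N-1)}{(a(N-1)+M)\kappa}$ and set $c:=\frac{1}{2}\bigl(\frac{1}{\min_k\log\frac{1}{1-p_k}}+\frac{1-\varepsilon(N-1)}{(a(N-1)+M)\kappa}\bigr)$, so that $c>\frac{1}{\min_k\log\frac{1}{1-p_k}}$, $c(a(N-1)+M)\kappa<1$ and $0\le\varepsilon<\frac{1-c(a(N-1)+M)\kappa}{N-1}$. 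Then pick $\delta$ with $\frac{1}{1-c(a(N-1)+M)\kappa}<\delta<\frac{1}{(N-1)\varepsilon}$; the left inequality makes the series bounding $\sup_t\D(X(t))$ converge (Lemma~\ref{L4.2} plus the velocity decay above), while $\frac{1}{\phi(r)}=O(r^\varepsilon)$ gives $\phi(x^\infty)^{-(N-1)\delta}=O\bigl((x^\infty)^{(N-1)\delta\varepsilon}\bigr)$ with $(N-1)\delta\varepsilon<1$, so a finite $x^\infty$ solving the self-consistency inequality exists; this yields $(\tilde{\mathcal A}3)$ pathwise and hence pathwise flocking on the event $(\tilde{\mathcal A}1)\cap(\tilde{\mathcal A}2)$. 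Finally $(\mathcal B1)$--$(\mathcal B2)$ force $(\tilde{\mathcal A}1)$--$(\tilde{\mathcal A}2)$ with probability tending to one exactly as in Lemma~\ref{L4.4} (here $(\mathcal B2)$ is precisely what replaces the compact-support hypothesis of \cite{D-H-J-K}, and it is verified for the Poisson and geometric processes in Propositions~\ref{P5.1}--\ref{P5.2}), and combining the two gives $\mathbb{P}(\mathcal F)\ge\mathbb{P}(A)-\mathbb{P}(B^c)\ge p_1(n)-\tilde p(n)\to1$ as $n\to\infty$.

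The main obstacle is the continuous flow-map lower bound quoted above: the discrete proof of Lemma~\ref{L4.1} rests on the purely algebraic identity $\prod_i(aId+B_i)\ge a^{m-1}\sum_i B_i$, which has no literal counterpart for time-ordered exponentials, so one must instead run the Duhamel iteration and carefully track that the leading term $e^{-\kappa\Delta t}\,Id$ stays positive on the diagonal while the first Duhamel correction reproduces the adjacency structure with the uniform weight $\phi(x^\infty)$. Fortunately this estimate is essentially already contained in \cite{D-H-J-K}; once it is in place, everything new here is the bookkeeping with the logarithmically growing block lengths $a_\ell(n,c)$, which is carried out exactly as in Sections~\ref{sec:4.2}--\ref{sec:5.1}.
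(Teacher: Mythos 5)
Your proposal is correct and follows exactly the route the paper intends: the paper gives no detailed proof of Theorem \ref{T5.1}, stating only that ``the similar analysis together with the proof in \cite{D-H-J-K} used in previous sections yields the improved result,'' and your transplantation of Steps A--C under the dictionary $\log\frac{1}{1-h\kappa}\leftrightarrow\kappa$, $M+N-1\leftrightarrow a(N-1)+M$, with the Duhamel lower bound $\Phi(I_j)\ge e^{-\kappa|I_j|}\bigl(Id+\tfrac{|I_j|\phi(x^\infty)}{N}\chi^{k_j}\bigr)$ replacing the algebraic product identity, is precisely that analysis spelled out. The key point the paper emphasizes in its remark --- that the bound $t^*_{i(N-1)}-t^*_{(i-1)(N-1)}\le(a(N-1)+M)(n+\lfloor c\log i(N-1)\rfloor)$ now comes from $(\mathcal B2)$ rather than from compact support of $f$ --- is correctly identified and used in your argument.
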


\vspace{0.2cm}

\begin{remark}
In the previous work \cite{D-H-J-K}, we assumed that the sequence of dwelling times $\{t_{\ell+1}- t_\ell\}$ are i.i.d with a probability density function $f$ supported on a finite interval $[a,b]$ with $0<a<b<\infty$. This compact support condition was previously assumed to get the upper bound estimate for the sequence $\{t_{r(N-1)}^* - t_{(r-1)(N-1}^*\}_{r\in\bbn}$ which also appears in Proposition \ref{P4.1}. However, thanks to our current analysis, the compact support condition of $f$ can be weakened and we may regard $f$ as being supported on $[a,\infty)$, which is an improvement from the previous paper.
\end{remark}

\vspace{0.2cm}

\section{Conclusion}\label{sec:6}
In this work, we provided a sufficient framework for the stochastic flocking for the discrete Cucker-Smale model with randomly switching topologies. Our sufficient framework involves conditions on the admissible network topologies and switching instants. First, we assume that the union of network topologies in an admissible set contains at least one spanning tree so that all the particles will be eventually connected in the space-time domain. Second, we assume that the dwelling times are short enough in a probabilistic sense so that all the particles communicate sufficiently often in a whole topological-temporal domain. Under these two assumptions, we showed that the discrete Cucker-Smale model exhibits a stochastic flocking for generic initial data asymptotically. For definiteness, we also show that the Poisson process and the geometric process for the dwelling time process both satisfy the second assumption in our framework. Although our current work corresponds to the discrete analog of the continuous one, we can improve the earlier result \cite{D-H-J-K} for the continuous C-S model by removing the compact support assumption on the probability distribution function for switching time process.
There are many other issues that we did not investigate in this work. For example, our analysis focuses only on global flocking. However, as noticed in literature, even for all-to-all couplings, the Cucker-Smale ensemble may not exhibit a global stochastic flocking depending on the decaying nature of the communication weight function, namely, formation of local flocking. Moreover, in this work, we did not consider other physical effects such as time-delay, random media, interaction with other internal variables, etc. These issues will be treated in our future work.


\begin{thebibliography}{99}


\bibitem{A-H} S. Ahn and S.-Y. Ha: \textit{Stochastic flocking dynamics of the Cucker-Smale model with multiplicative white noises.} J. Math. Phys. {\bf 51} (2010) 103301.

\bibitem{A-B-F} G. Albi, N. Bellomo, L. Fermo,  S.-Y. Ha, J. Kim, L. Pareschi, D. Poyato and J. Soler: \textit{Vehicular traffic, crowds, and swarms. On the kinetic theory approach towards research perspectives.} Math. Models Methods Appl. Sci. {\bf 29} (2019) 1901-2005.

\bibitem{B-C} M. Ballerini, N. Cabibbo, R. Candelier, A. Cavagna, E. Cisbani, I. Giardina, V. Lecomte, A. Orlandi, G. Parisi, A. Procaccini, M. Viale and V. Zdravkovic: \textit{Interaction ruling animal collective behavior depends on topological rather than metric distance:
Evidence from a field study.} Proc. Natl. Acad. Sci. USA {\bf 105} (2008) 1232-1237.


\bibitem{B-C-C} F. Bolley, J. A. Canizo and J. A. Carrillo: \textit{Stochastic mean-field limit: Non-Lipschitz forces and swarming.} Math. Models Methods Appl. Sci. {\bf 21} (2011) 2179-2210.

\bibitem{C-C-R} J. A. Canizo, J. A. Carrillo and J. Rosado: \textit{A well-posedness theory in measures for some kinetic models of collective motion.} Math. Models Methods Appl. Sci. {\bf 21} (2011) 515-539.

\bibitem{C-F-R-T} J. A. Carrillo, M. Fornasier, J. Rosado and G. Toscani: \textit{Asymptotic flocking dynamics for the kinetic Cucker-Smale model.} SIAM J. Math. Anal. {\bf 42} (2010) 218–236.


\bibitem{C-H-L} Y.-P. Choi, S.-Y. Ha and Z. Li: \textit{Emergent dynamics of the Cucker-Smale flocking model and its variants, in Active Particles, Vol. 1: Theory, Models, Applications.} eds. N. Bellomo, P. Degond and E. Tadmor, Modeling and Simulation in Science and Technology (Birkh\"auser, 2017), pp. 299-331.

\bibitem{C-H} Y.-P. Choi and J. Haskovec: \textit{Cucker-Smale model with normalized communication
weights and time delay.} Kinet. Relat. Models {\bf 10} (2017) 1011-1033.

\bibitem{C-L} Y.-P. Choi and Z. Li: \textit{Emergent behavior of Cucker-Smale flocking particles with heterogeneous time delays.} Appl. Math. Lett. {\bf 86} (2018) 49-56.

\bibitem{C-D} F. Cucker and J.-G. Dong: \textit{Avoiding collisions in flocks.} IEEE Trans. Automat. Control {\bf 55} (2010) 1238-1243.

\bibitem{C-D2} F. Cucker and J.-G. Dong: \textit{On flocks influenced by closest neighbors.} Math. Models Methods Appl. Sci. {\bf 26} (2016) 2685-2708.

\bibitem{C-D3} F. Cucker and J.-G. Dong: \textit{On flocks under switching directed interaction topologies.} SIAM J. Appl. Math {\bf 79} (2019) 95-110.

\bibitem{C-M} F. Cucker and E. Mordecki: \textit{Flocking in noisy environments.} J. Math. Pures Appl. {\bf 89} (2008) 278-296.

\bibitem{C-S} F. Cucker and S. Smale: \textit{Emergent behavior in flocks.} IEEE Trans. Automat. Control {\bf 52} (2007) 852-862.


\bibitem{D-H-J-K} J.-G. Dong, S.-Y. Ha, J. Jung and D. Kim: \textit{On the stochastic flocking of the Cucker-Smale flock with randomly switching topologies}. available at arXiv:1911.07390.

\bibitem{D-H-K} J.-G. Dong, S.-Y. Ha and D. Kim: \textit{Emergent behaviors of continuous and discrete thermomechanical Cucker-Smale models on general digraphs.} Math. Models Meth. Appl. Sci. \textbf{29} (2019) 589-632.

\bibitem{D-H-K2} J.-G. Dong, S.-Y. Ha and D. Kim: \textit{Interplay of time-delay and velocity alignment in the Cucker-Smale model on a general digraph.} Discrete Contin. Dyn. Syst.-Ser. B. {\bf 22} (2019) 1-28.

\bibitem{D-Q} J.-G. Dong and L. Qiu: \textit{Flocking of the Cucker-Smale model on general digraphs.} IEEE Trans. Automat. Control {\bf 62} (2017) 5234-5239.

\bibitem{D-M} F. Dalmao and E. Mordecki: \textit{Cucker-Smale flocking under hierarchical leadership and random interactions.} SIAM J. Appl. Math. {\bf 71} (2011), 1307-1316.

\bibitem{D-M2} F. Dalmao and E. Mordecki: \textit{Hierarchical Cucker-Smale model subject to random failure.} IEEE Trans. Automat. Control {\bf 57} (2012) 1789-1793.

\bibitem{E-H-S} R. Erban, J. Haskovec and Y. Sun: \textit{On Cucker-Smale model with noise and delay.} SIAM J. Appl. Math. {\bf 76} (2016) 1535-1557.

\bibitem{F-H-T} M. Fornasier, J. Haskovec and G. Toscani: \textit{Fluid dynamic description of flocking via Povzner-Boltzmann equation.} Physica D {\bf 240} (2011) 21-31.

\bibitem{H-K-L} S.-Y. Ha, D. Kim and Z. Li: \textit{Emergent flocking dynamics of the discrete thermodynamic Cucker-Smale model.} To appear in Quart. Appl. Math.

\bibitem{H-K-K-S} S.-Y. Ha, D. Kim, D. Kim and W. Shim: \textit{Flocking dynamics of the inertial spin model with a multiplicative communication weight.} J. Nonlinear Sci. {\bf 29} (2019) 1301-1342.

\bibitem{H-K-R} S.-Y. Ha, J. Kim and T. Ruggeri: \textit{Emergent behaviors of thermodynamic Cucker-Smale particles.} SIAM J. Math. Anal. {\bf 50} (2018) 3092-3121.

\bibitem{H-K-Z} S.-Y. Ha, J. Kim and X. Zhang: \textit{Uniform stability of the Cucker-Smale model and its application to the mean-field limit.} Kinet. Relat. Mod. {\bf 11} (2018) 1157-1181.


\bibitem{H-L-L} S.-Y. Ha, K. Lee and D. Levy: \textit{Emergence of time-asymptotic flocking in a stochastic Cucker-Smale system.} Commun. Math. Sci. {\bf 7} (2009) 453-469.

\bibitem{H-L} S.-Y. Ha and J.-G. Liu: \textit{A simple proof of Cucker-Smale flocking dynamics and mean field limit.} Commun. Math. Sci. {\bf 7} (2009) 297-325.

\bibitem{H-T} S.-Y. Ha and E. Tadmor: \textit{From particle to kinetic and hydrodynamic description of flocking.} Kinet. Relat. Models {\bf 1} (2008) 415-435.

\bibitem{H-M} Y. He and X. Mu: \textit{Cucker-Smale flocking subject to random failure on general digraphs.} Automatica {\bf 106} (2019) 54-60.


\bibitem{L-P} N. E. Leonard, D. A. Paley, F. Lekien, R. Sepulchre, D. M. Fratantoni and R. E. Davis: \textit{Collective motion, sensor networks and ocean sampling.} Proc. IEEE {\bf 95} (2007)
48-74.

\bibitem{L-H} Z. Li and S.-Y. Ha: \textit{On the Cucker-Smale flocking with alternating leaders.} Quart. Appl. Math. {\bf 73} (2015) 693-709.

\bibitem{L-X} Z. Li and X. Xue: \textit{Cucker-Smale flocking under rooted leadership with fixed and switching topologies.} SIAM J. Appl. Math. {\bf 70} (2010) 3156-3174.


\bibitem{L-W} Y. Liu and J. Wu: \textit{Flocking and asymptotic velocity of the Cucker-Smale model with processing delay.} J. Math. Anal. Appl. {\bf 415} (2014) 53-61

\bibitem{L-A-J} Lu, W., Atay, F. M. and Jost, J.: \textit{Consensus and synchronization in discrete-time networks on multi-agents with stochastically switching topologies and time-delays.}.  Netw. Heterog. Media {\bf 6} 2011, 329-349.

\bibitem{M-T} S. Motsch and E. Tadmor: \textit{A new model for self-organized dynamics and its flocking behavior}, J. Stat. Phys. {\bf 144} (2011) 923-947.

\bibitem{M-T2}S. Motsch and E. Tadmor: \textit{Heterophilious dynamics: Enhanced consensus.} SIAM Rev. {\bf 56} (2014) 577-621

\bibitem{P-K-H} J. Park, H. Kim and S.-Y. Ha: \textit{Cucker-Smale flocking with inter-particle bonding forces.} IEEE Trans. Automat. Control {\bf 55} (2010) 2617-2623.


\bibitem{P-V} C. Pignotti and I. R. Vallejo: \textit{Flocking estimates for the Cucker-Smale model with time lag and hierarchical leadership.} J. Math. Anal. Appl. {\bf 464} (2017) 1313-1332.

\bibitem{P-S} D. Poyato and J. Soler: \textit{Euler-type equations and commutators in singular and hyperbolic limits of kinetic Cucker-Smale models.} Math. Models Methods Appl. Sci. {\bf 27} (2017) 1089-1152.

\bibitem{R-L-X} L. Ru, Z. Li and X. Xue: \textit{Cucker-Smale flocking with randomly failed interactions.} J. Franklin Inst. {\bf 352} (2015) 1099-1118.


\bibitem{Shen} J. Shen: \textit{Cucker-Smale flocking under hierarchical leadership.} SIAM J. Appl. Math. {\bf 68} (2007) 694-719.


\bibitem{T-T} J. Toner and Y. Tu: \textit{Flocks, herds, and schools: A quantitative theory of flocking} Phys. Rev. E {\bf 58} (1998) 4828-4858.


\bibitem{V} T. Vicsek, A. Czir\'ok, E. Ben-Jacob, I. Cohen and O. Schochet: \textit{Novel type of phase transition in a system of self-driven particles.} Phys. Rev. Lett. {\bf 75} (1995) 1226-1229.

\bibitem{V2} T. Vicsek and A. Zefeiris: \textit{Collective motion.} Phys. Rep. {\bf 517} (2012) 71-140.

%
\bibitem{Wu06}
C.W. Wu: \textit{Synchronization and convergence of linear dynamics
in random directed networks.} IEEE Trans. Automat. Control {\bf 51} (2006), 1207--1210.


\end{thebibliography}
\end{document}